\newcommand{\R}{\mathbb R}
\newcommand{\Dx}{{\Delta x}}
\newcommand{\rhomax}{\rho_{\textup{max}}}
\newcommand{\DtsuDx}{\frac{\Delta t}{\Delta x}}
\newcommand{\alphauno}{u_\ell}
\newcommand{\alphadue}{v_\ell}
\renewcommand{\beta}{w_r}
\newcommand{\rhomeno}{\rho_\ell}
\newcommand{\rhopiu}{\rho_r}
\newcommand{\rholeft}{\rho_\ell}
\newcommand{\rhoright}{\rho_r}
\newcommand{\hL}{h_\ell}
\newcommand{\hR}{h_r}
\newcommand{\hc}{h_c}
\newcommand{\compl}{\sharp}
\newcommand{\ul}{u_\ell}
\newcommand{\vl}{v_\ell}
\newcommand{\zc}{z_c}
\renewcommand{\wr}{w_r}
\newcommand{\Frac}{\displaystyle\frac}
\newtheorem{theorem}{Theorem}[section]
\newtheorem{proposition}{Proposition}
\theoremstyle{definition}
\newtheorem{definition}[theorem]{Definition}
\newtheorem{remark}{Remark}
\newtheorem*{notation}{Notation}
\title[An easy-to-use algorithm for simulating traffic flow] 
      {An easy-to-use algorithm for simulating traffic flow on networks: theoretical study}
\author[M. Briani and E. Cristiani]{}
\subjclass{Primary: 65M08; Secondary: 90B20.}
\keywords{LWR model, Godunov scheme, multi-population model, multi-path model, multi-commodity model, multi-class model, source-destination model, buffer model, traffic flow, networks.}
 \email{m.briani@iac.cnr.it}
 \email{e.cristiani@iac.cnr.it}
\thanks{The research leading to these results has received funding from the European Union FP7 under grant No.\ 257462 HYCON2 Network of Excellence. The research was also supported by the collaboration with the private company ZEROPIU (Italy) and by the Google Research Award ``Multipopulation Models for Vehicular Traffic and Pedestrians'', 2012-2013.}
\begin{document}
\maketitle

\centerline{\scshape Maya Briani}
\medskip
{\footnotesize
 \centerline{IAC-CNR}
   \centerline{Via dei Taurini, 19}
   \centerline{Rome, Italy}
} 

\medskip

\centerline{\scshape Emiliano Cristiani}
\medskip
{\footnotesize
 \centerline{IAC-CNR}
   \centerline{Via dei Taurini, 19}
   \centerline{Rome, Italy}
}

\bigskip

 \centerline{(Communicated by the associate editor name)}

\begin{abstract}
In this paper we study a model for traffic flow on networks based on a hyperbolic system of conservation laws with discontinuous flux. Each equation describes the density evolution of vehicles having a common path along the network. In this formulation the junctions disappear since each path is considered as a single uninterrupted road.

We consider a Godunov-based approximation scheme for the system which is very easy to implement. Besides basic properties like the conservation of cars and positive bounded solutions, the scheme exhibits other nice properties, being able to select automatically a solution at junctions without requiring external procedures (e.g., maximization of the flux via a linear programming method). Moreover, the scheme can be interpreted as a discretization of the traffic models with buffer, although no buffer is introduced here.

Finally, we show how the scheme can be recast in the framework of the classical theory of traffic flow on networks, where a conservation law has to be solved on each arc of the network. This is achieved by solving the Riemann problem for a modified equation, and showing that its solution corresponds to the one computed by the numerical scheme.
\end{abstract}

\section{Introduction}\label{sec:introduction}
Starting from the introduction of the LWR model \cite{LW, R}, a huge literature about macroscopic fluid-dynamic models for traffic flow was developed. More recently, models, theory and numerical approximations for traffic flow on networks became a hot topic \cite{coclite2005SIMA, daganzo1995TRBa, piccolibook, holden1995SIMA}. The interest in forecasting traffic flow on large networks became even stronger in the very last years, due to the increasing number of GPS devices (smartphones, satellite navigators, black boxes) which provide real-time traffic data. Private companies like GOOGLE, WAZE MOBILE, NOKIA \cite{herrera2010TRB}, INRIX, OCTOTELEMATICS \cite{cristiani2010CAIM}, ZEROPIU, YANDEX, started collecting data and, in some cases, broadcasting traffic forecast.

The LWR model describes the evolution of the average density of vehicles on a road by means of a conservation law of the form
\begin{equation}\label{CL}
\frac{\partial}{\partial t}\rho+\frac{\partial}{\partial x}f(\rho)=0, \qquad x\in (a,b), \quad t>0
\end{equation}
where $\rho(x,t)$ is the vehicle density at point $x$ and time $t$ and $f=f(\rho(x,t))$ is a given flux function. Extension of such model to road networks (i.e., graphs) is not straightforward since the dynamics at junctions is not uniquely determined. Indeed, imposing the conservation of vehicles at junctions and complying with drivers' preferences with regards the desired directions still leaves infinite admissible solutions for the density.

\medskip

\noindent \textit{Goal.} Preliminarily, we show the relationship between a first-order version of the model proposed by Hilliges and Weidlich \cite{hilliges1995TRB} and the source-destination model proposed by Garavello and Piccoli \cite{garavello2005CMS}. The main feature of these models is that they are able to track different populations of drivers, characterized by their path along the network, thus keeping a global view of the vehicular flow. 
The Hilliges and Weidlich's model, hereafter called \textit{multi-path}, is particularly interesting since the junctions apparently disappear. This is made possible by the fact that each path is considered as a single uninterrupted road, and the junctions are hidden in the coupling of the equations which describe the densities of the single populations.

The main goal of this paper is investigating the properties of a new Godunov-based numerical scheme for the multi-path model, recently proposed in the ``twin'' paper \cite{bretti2013DCDS-S}, showing unexpected similarities to traffic models with \textit{buffer} \cite{bressan2014preprint,garavello2012DCDS,garavello2013bookchapt,herty2009NHM}. Considering that the proposed numerical scheme prescribes a unique solution for the density at junction, the question arises which solution is automatically selected among the admissible ones. We also give possible answers to this question.

\medskip

\noindent \textit{Relevant literature.} First of all, let us stress that the multi-path model differs from the so-called \emph{multi-population} or \emph{multi-class} models, see, e.g., \cite{benzoni2003EJAM, wong2002TRA}. In those cases, the models consist of one equation for a single road (extension to networks is also possible) with different velocity functions $v_i$, one for each class of vehicles. Typically, the populations have different maximal velocities, in order to take into account different types of vehicles or drivers' behaviors.

The multi-path model shares instead the same purpose of the source-destination model \cite{garavello2005CMS} (see also \cite{herty2008CMS}, \cite[Sect.\ 7]{lebacque1996proc} and the recent paper \cite{bressan2014preprint}). In that case, vehicles are divided in different populations on the basis on their source-destination pair. The model consists of a system of nonlinear and semilinear PDEs, one for the total density $\rho$ and the others for the percentages $\pi$'s of vehicles belonging to the different populations.

Let us also mention the papers \cite{bressan2014preprint,garavello2012DCDS,garavello2013bookchapt,herty2009NHM} which deal with \textit{buffer} models. In that case the junction is seen as a 0-dimensional space and its load is described by the number of cars lying in it at any time. The load varies accordingly to the difference between the inflow and the outflow at the buffer and evolves according to a dedicated ODE.

The multi-path model is based on a system of nonlinear conservation law with discontinuous flux. Several papers investigate from the theoretical point of view (systems of) scalar conservation laws. The interested reader can find in the book \cite{levequebook} an introduction to the field, in \cite{adreianov2011ARMA,burger2008JEM} some references for the case of discontinuous flux, and in the book \cite{bressanbook} the analysis of the systems of conservation laws. Systems of scalar conservation laws with discontinuous flux are instead less studied. An attempt related to traffic flow can be found in \cite{mercier2009JMAA}, where a model very similar to the one considered here is investigated.
From the numerical point of view, a good basic reference is again the book \cite{levequebook}. We also point out the paper \cite{herty2007ANM}, where a numerical method for (systems of) scalar conservation laws with discontinuous flux is proposed, and the paper \cite{towers2000SINUM}, where the convergence of a Godunov-based scheme for scalar conservation laws with discontinuous flux is investigated.

\medskip

\noindent \textit{Paper organization.} In Section 2 we describe the multi-path model, pointing out the analogies with the model introduced in \cite{garavello2005CMS}. We also introduce a junction-oriented version of the model particularly suited for large networks and real applications.

In Section 3 we introduce the Godunov-based numerical scheme and we show that it is conservative, and its solution is positive and bounded by the maximal admissible density on the roads.

In Section 4 we show how the numerical scheme basically acts as a discretization of the models with buffer, although no buffer is explicitly introduced here. 

In Section 5 we focus on the case of a merge (a single junction with 2 incoming roads and 1 outgoing road). Since it is well known that in this case the LWR model on networks admits infinite solutions at junction (see, e.g., \cite{piccolibook}), we try to understand which solution is automatically selected by the scheme among the admissible ones. To this end, we first compute the asymptotic solution of the numerical scheme for some constant initial/boundary conditions and then we propose a new set of equations, compatible with the classical theory of traffic flow on networks (where a single equation is solved on any arc of the graph), such that the solution of the Riemann problem associated to those equations coincides with the solution of the numerical scheme. Finally we present some numerical results in order to confirm the theoretical findings.

In Section 6 we sketch some conclusions and in the Appendices we report the proofs of the main theorems and some basic notions about the (half) Riemann problem.

\section{The multi-path model}\label{sec:model}
In this section we introduce a first-order version of the model proposed in \cite[Sect.4]{hilliges1995TRB}, which will be studied in the next sections. 

Let us consider a network, i.e.\ a directed graph with $N_R$ arcs (roads) and $N_J$ nodes (junctions). \textit{Vehicles moving on the network are divided on the basis of their path}. Let us assume that the number of possible paths on the graph is $N_P$ and denote those paths by $P^1,\ldots,P^p,\ldots,P^{N_P}$, see Fig.\ \ref{fig:network}. 
\begin{figure}[h!]
\begin{center}
\begin{psfrags}
\psfrag{P1}{$P^1$} \psfrag{P2}{$P^2$}
\includegraphics[width=0.47\textwidth]{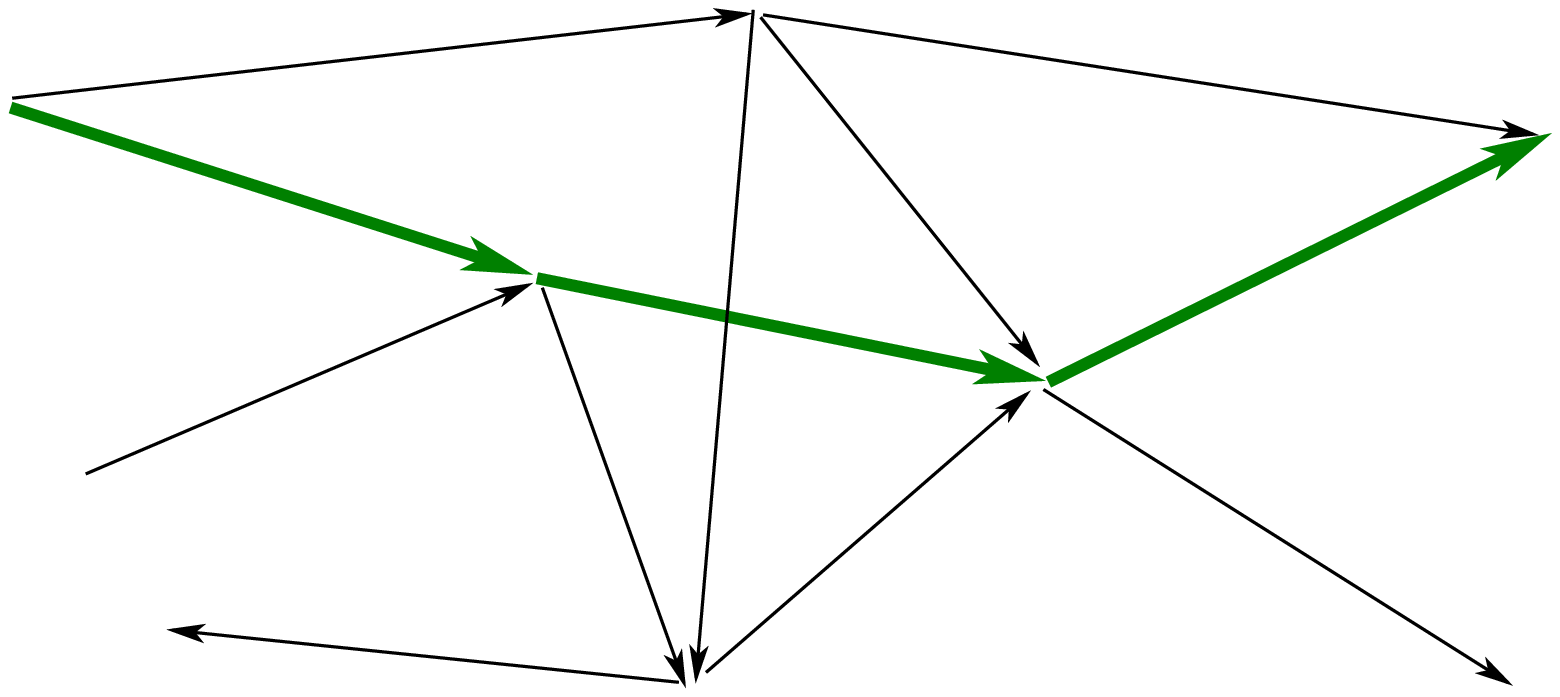}\qquad
\includegraphics[width=0.47\textwidth]{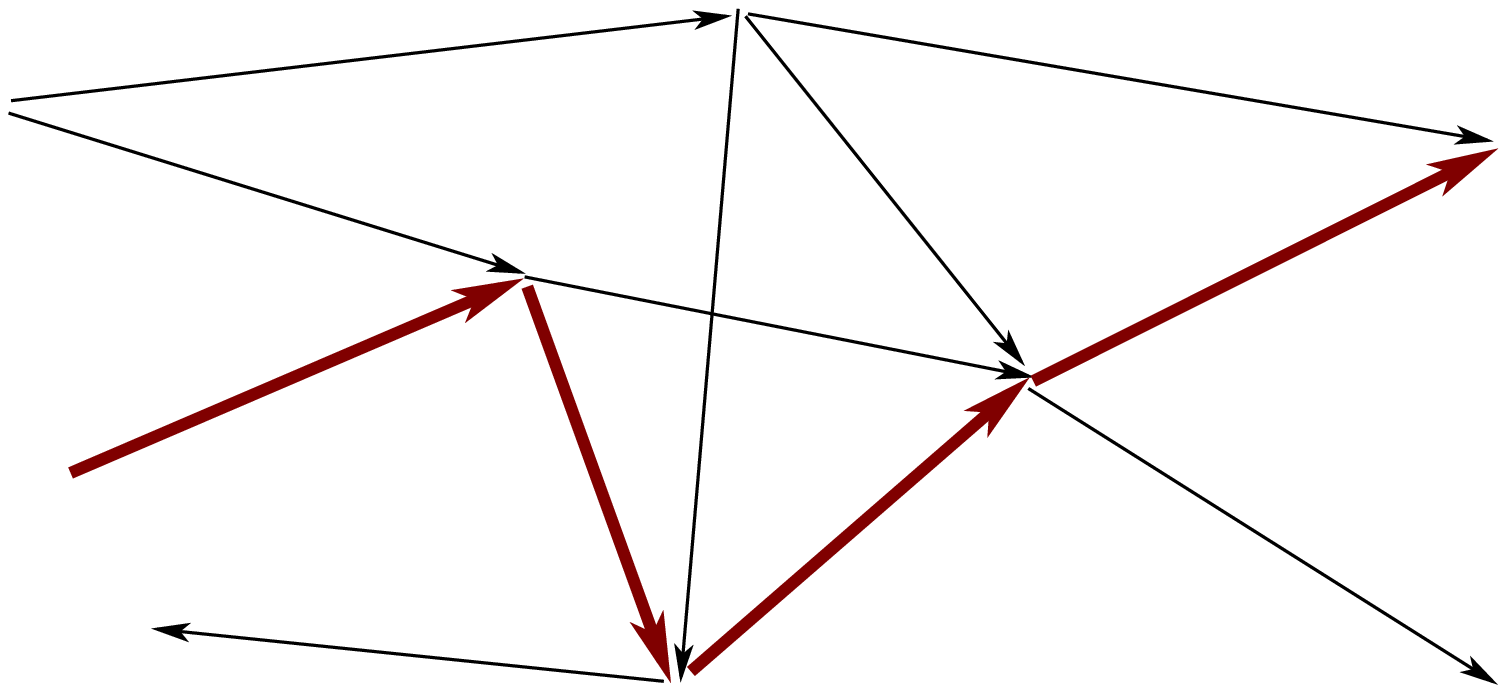}
\end{psfrags}
\end{center}
\caption{A generic network. Two possible paths are highlighted. Note that the two paths share an arc of the network.}
\label{fig:network}
\end{figure}
We stress that paths can share some arcs of the network. A point $x^{(p)}$ of the network is characterized by both the path $p$ it belongs to and the distance $x$ from the origin of that path. We denote by $\mu^p(x^{(q)},t)$ the density of the vehicles following the $p$-th path at point $x^{(q)}$ at time $t>0$, and we assume that $\mu^p(x^{(q)},t) \in [0,\rhomax]$ for some maximal density $\rhomax$. Note that we have, by definition, $\mu^p(x^{(q)},t)=0$ if $x^{(q)}\notin P^p$. 
We also define
\begin{equation}\label{def:omega}
\omega(x^{(p)},t):=\sum_{q=1}^{N_P}\mu^q(x^{(p)},t),
\end{equation}
i.e.\ $\omega(x^{(p)},t)$ is the sum of all densities living at $x^{(p)}$ at time $t$.
The function $\omega$ is expected to be discontinuous, especially at junctions, and contains all the information about the topology of the network.
Note that, for any point $x^{(q)}$, the densities $\mu^p(x^{(q)},t)$, $p=1,\ldots,N_P$, are admissible if their sum $\omega(x^{(q)},t)\leq\rhomax$.
Let us denote by $v(\omega)$ the velocity of vehicles (given as a function of the density) and by $f(\omega)=\omega v(\omega)$ the flux of vehicles. 
The LWR-based multi-path model is constituted by the following system of $N_P$ conservation laws with space-dependent and discontinuous flux 
\begin{equation}\label{eq:system_v}
\frac{\partial}{\partial t}\mu^p(x^{(p)},t)+
\frac{\partial}{\partial x^{(p)}}\left(\mu^p(x^{(p)},t)\ v\big(\omega(x^{(p)},t)\big)\right)=0, \quad x^{(p)}\in P^p,\ t>0,
\end{equation}
or, equivalently,
\begin{equation}\label{eq:system_f}
\frac{\partial}{\partial t}\mu^p(x^{(p)},t)+
\frac{\partial}{\partial x^{(p)}}\left(\frac{\mu^p(x^{(p)},t)}{\omega(x^{(p)},t)}\ f\big(\omega(x^{(p)},t)\big)\right)=0, \quad x^{(p)}\in P^p,\ t>0,
\end{equation}
for $p=1,\ldots,N_P$.
If $\omega=0$ we have, \textit{a fortiori}, $\mu^p=0$, then it is convenient to set $\frac{\mu^p}{\omega}=0$ in (\ref{eq:system_f}) to avoid singularities.
In the following we assume that the flux $f\in C^0([0,\rhomax])\cap C^1((0,\rhomax))$ and
\begin{equation}\label{proprieta_f}
f(0)=f(\rhomax)=0,\qquad f \textrm{ is strictly concave}, \qquad \sigma:=\arg\max_{\omega\in (0,\rhomax)}f(\omega). 
\end{equation}
Equations of the system (\ref{eq:system_v}) (or (\ref{eq:system_f})) are coupled by means of the velocity $v$, which depends on the total density $\omega$. On the other hand, not all the equations of the system are coupled with each other because paths do not have necessarily arcs in common. Note that in this formulation the junctions do not appear explicitly, since every equation is solved in the \textit{uninterrupted} one-dimensional domain $P^p$. Junctions are actually hidden in the function $\omega$, since the indices of non-zero densities $\mu^q$'s in its definition \eqref{def:omega} change abruptly at junctions. 

\medskip

As a preliminary remark, it is important to show the strict relationship between the multi-path model and the source-destination model \cite{garavello2005CMS}. 
Let us focus on a single road $(a,b)$ and denote by $\rho$ the total density $\omega$. Let us also define, for any $\xi\in(a,b)$ and $t>0$, $\pi^p(\xi,t):=\frac{\mu^p(\xi,t)}{\rho(\xi,t)}$ as the fraction of vehicles following the path $p$. Then, from \eqref{eq:system_v}, we formally have
\begin{eqnarray*}
\frac{\partial}{\partial t}\mu^p+\frac{\partial}{\partial x}(\mu^p v(\rho))=0 \\
\Leftrightarrow
\frac{\partial}{\partial t}(\pi^p \rho)+\frac{\partial}{\partial x}(\pi^p \rho v(\rho))=0 \\
\Leftrightarrow
\frac{\partial}{\partial t}(\pi^p) \rho+
\pi^p\frac{\partial}{\partial t}\rho+
\frac{\partial}{\partial x}(\pi^p) \rho v(\rho)+
\pi^p\frac{\partial}{\partial x}(\rho v(\rho))=0\\
\Leftrightarrow
\pi^p\left(\frac{\partial}{\partial t}\rho+\frac{\partial}{\partial x}(\rho v(\rho))\right)
+
\rho\left(\frac{\partial}{\partial t}\pi^p+\frac{\partial}{\partial x}(\pi^p)v(\rho)\right)=0
\end{eqnarray*}
which holds true if
\begin{equation}\label{eq:sys_source-dest}
\left\{
\begin{array}{l}
\frac{\partial}{\partial t}\rho+\frac{\partial}{\partial x}(\rho v(\rho))=0,\\ [2mm]
\frac{\partial}{\partial t}\pi^p+v(\rho)\frac{\partial}{\partial x}\pi^p=0,
\end{array}
\right.
\end{equation}
i.e.\ the source-destination model.

A similar system of equations appears in the buffer model proposed in \cite{bressan2014preprint}, where the authors study the Cauchy problem defined by the system \eqref{eq:sys_source-dest} supplemented by ODEs describing the state of the buffer at junctions. We will see in Section \ref{sec:buffer} that our numerical approach based on the approximation of \eqref{eq:system_f} basically corresponds to a discretization of the models with buffer.

\section{The numerical scheme and its basic properties}\label{sec:scheme_and_properties}
In this section we present the numerical approximation we use to discretize the system (\ref{def:omega}),(\ref{eq:system_f}). The same approximation is also considered in the ``twin'' paper \cite{bretti2013DCDS-S}, together with some numerical results. 

For any path $p$, we define a numerical grid in $P^p\times[0,+\infty)$ with space step $\Delta x>0$ and time step $\Delta t>0$.

\subsection{The scheme and the algorithm}\label{sec:scheme}
Let us denote by $x_k^{(q)}:=k^{(q)}\Delta x$, $k^{(q)}\in\mathbb Z$, the center of the $k^{(q)}$-th space cell along the path $P^q$, and by $t^n:=n\Delta t$, $n\in\mathbb N$, the center of the $n$-th time cell. Let us also denote by $\mu^{n,p}_{k^{(q)}}$ the approximate density $\mu^p(x_k^{(q)},t^n)$. Then, analogously to (\ref{def:omega}), we define
\begin{equation}\label{def:omega_approx}
\omega^{n}_{k^{(p)}}:=\sum_{q=1}^{N_P}\mu^{n,q}_{k^{(p)}}.
\end{equation}
From now on, to avoid cumbersome notations, we write $k^p$ instead of $k^{(p)}$.

Equation (\ref{eq:system_f}) is discretized by means of the following Godunov-type scheme, which reads, at any internal cell $k^p$, as
\begin{equation}\label{schema}
\mu_{k^p}^{n+1,p} = \mu_{k^p}^{n,p}-
\DtsuDx\left(\frac{\mu_{k^p}^{n,p}}{\omega_{k^p}^{n}} \ G_f(\omega_{k^p}^{n},\omega_{k^p+1}^{n})- 
\frac{\mu_{k^p-1}^{n,p}}{\omega_{k^p-1}^{n}}\ G_f(\omega_{k^p-1}^{n},\omega_{k^p}^{n})
\right)
\end{equation}
for $n\geq 0$ and $p=1,\ldots,N_P$, where $G_f$ is the classical $f$-based Godunov numerical flux defined, as usual \cite{levequebook}, as
\begin{equation}\label{GodunovFlux}
G_f(\rhomeno,\rhopiu):=
\left\{
\begin{array}{ll}
\min\{f(\rhomeno),f(\rhopiu)\} & \textrm{if } \rhomeno\leq \rhopiu \\
f(\rhomeno) & \textrm{if } \rhomeno>\rhopiu \textrm{ and } \rhomeno<\sigma \\
f(\sigma) & \textrm{if } \rhomeno>\rhopiu \textrm{ and } \rhomeno \geq \sigma \geq \rhopiu \\
f(\rhopiu) & \textrm{if } \rhomeno>\rhopiu \textrm{ and } \rhopiu>\sigma.
\end{array}
\right.
\end{equation}
In the following we drop the subscript $f$ from $G$ whenever the underlying flux is deducible without ambiguity.
Note the intrinsic asymmetry of this scheme: Coefficients in front of the fluxes involve only the cells $k^p$ and $k^p-1$, and not $k^p+1$.
 
The algorithm can be summarized as follows: 
\begin{enumerate}
\item Set the initial conditions $\mu^{0,p}_{k^p}$ for any $p$ at any internal cell $k^p$.
\item Set the boundary conditions at the beginning and the end of the $N_P$ paths at any time step $n$. 
\item Set $n=0$.
\item Compute $\omega^n_{k^p}$ at any internal cell $k^p$ by means of \eqref{def:omega_approx}.
\item Compute $\mu^{n+1,p}_{k^p}$ for any $p$ at any internal cell $k^p$ by means of \eqref{schema}.
\item If the final time is not reached go to step 4 with $n\leftarrow n+1$.
\end{enumerate}

We stress again that no special management of the junctions is needed. The simplicity of the scheme is the main strength of this approach, making it possible to simulate traffic flow on networks in minutes.

\begin{remark} 
If the network under consideration is small, the number of possible paths is reasonably small. Then, the number of equations in the system (\ref{eq:system_f}) fits a manageable size. Conversely, if the network is large and allows for a large number of paths, the computation of $\omega$ in (\ref{def:omega_approx}) becomes a hard task. In this case, one can adopt a hybrid point of view, creating a model which merges the features of the multi-path model with those of the classical models, where a PDE has to be solved on each arc of the network \cite{piccolibook}. In the internal cells of the arcs a single equation for the \emph{total} density is solved. 
Then, at the cell before each junction, vehicles' density is split on the basis of the desired direction of drivers (by means of some given distribution coefficients), and the scheme \eqref{schema} is applied. At the cell after the junction, sub-densities $\mu^p$'s are summed again. The price to pay is that the global behavior of drivers along the whole network is lost (cfr.\ also \cite{daganzo1994TRB, garavello2005CMS, tampere2011TRB} on this point). 
\end{remark}

\begin{remark} 
Although the scheme \eqref{schema} is clearly derived by the system \eqref{eq:system_f}, we do not state that the latter is consistent with the former as $\Delta x,\ \Delta t \to 0$. The derivation of the limit equation for the scheme \eqref{schema} is out of the scope of the paper. Rather, we prefer deriving a new set of equations which have the same solution of the numerical scheme, see Section \ref{sec:approfondimento2in1}.
\end{remark}

\subsection{Conservation of mass and admissibility of the solution}
It is immediate to prove that the scheme \eqref{schema} is \textit{conservative}, i.e.
$$
\sum_{k^p} \mu^{n+1,p}_{k^p}=\sum_{k^p} \mu^{n,p}_{k^p},\qquad \text{for any }n \text{ and } p
$$
and, \textit{a fortiori}, the sum $\omega$ is conserved in time (on both each road and the whole network).

\medskip

Another crucial property which the scheme must satisfy is that the single densities $\mu^p$'s and their sum $\omega$ never exceed $\rhomax$. The properties of the Godunov scheme guarantee that on each path the densities $\mu^p$'s are bounded by $\rhomax$, but this is no longer true for the \emph{sum} of the densities $\mu^p$, especially if two or more paths merge together.

To simplify the discussion, we first prove that the solution is bounded by $\rhomax$ in the case of a simple merge and a diverge. Generalizations will be discussed later on.

\subsubsection{Merge}\label{sec:merge}
Let us consider a network with three roads and one junction, with two incoming roads and one outgoing road. On this network two paths $P^1$ and $P^2$ are defined, see Fig.\ \ref{fig:network_2in_1out}. 
\begin{figure}[h!]
\begin{center}
\begin{psfrags}
\psfrag{P1}{$P^1$} \psfrag{P2}{$P^2$}
\psfrag{I1}{$[a_1,b_1]$} \psfrag{I2}{$[a_2,b_2]$} \psfrag{I3}{$[a_3,b_3]$}
\psfrag{J}{\tiny $J$} \psfrag{Jp}{\tiny $J\!\!+\!\!1$} \psfrag{Jm}{\tiny $J$-1}
\includegraphics[width=0.45\textwidth]{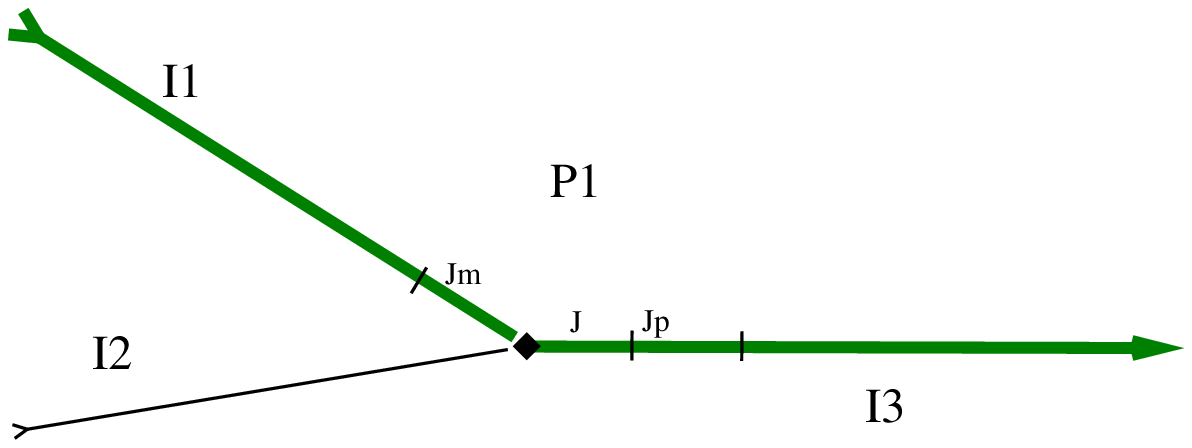}\quad
\includegraphics[width=0.45\textwidth]{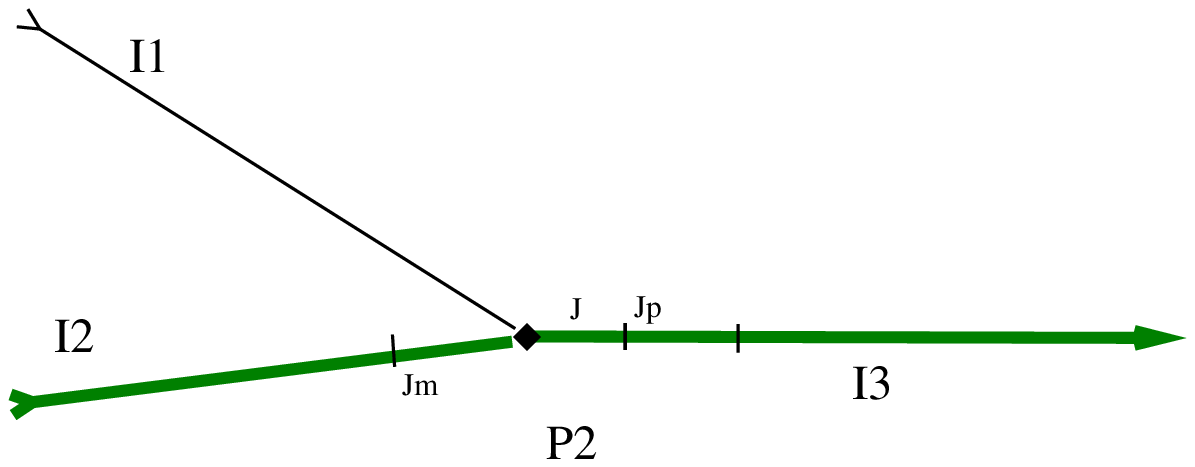}
\end{psfrags}
\end{center}
\caption{A network with 3 arcs and 1 junction, representing a merge. Path $P^1$ (left) and path $P^2$ (right).}
\label{fig:network_2in_1out}
\end{figure}
We denote by $J$ the grid cell \emph{just after} the junction, see Fig.\ \ref{fig:network_2in_1out}. Note that the cells before the junction, namely $J-1$, $J-2$, etc., can refer to one path or the other one, depending on the context.
We have
\begin{equation}\label{def_omega_2in1}
\omega_{k^1}^{n}=\left\{\begin{array}{ll}\mu_{k^1}^{n,1} & k^1<J, \\ \mu_{k^1}^{n,1}+\mu_{k^1}^{n,2} & k^1\geq J,\end{array}\right.
\qquad\qquad
\omega_{k^2}^{n}=\left\{\begin{array}{ll}\mu_{k^2}^{n,2} & k^2<J, \\ \mu_{k^2}^{n,1}+\mu_{k^2}^{n,2} & k^2\geq J,\end{array}\right.
\end{equation}
and the scheme (\ref{schema}) becomes
\begin{equation}\label{schema_2in1out}
\left\{\begin{array}{l}
\mu_{k^1}^{n+1,1} = \mu_{k^1}^{n,1}-\DtsuDx\left(\frac{\mu_{k^1}^{n,1}}{\omega_{k^1}^{n}}G(\omega_{k^1}^{n},\omega_{k^1+1}^{n})- \frac{\mu_{k^1-1}^{n,1}}{\omega_{k^1-1}^{n}}G(\omega_{k^1-1}^{n},\omega_{k^1}^{n})\right),\\
\\
\mu_{k^2}^{n+1,2} = \mu_{k^2}^{n,2}-\DtsuDx\left(\frac{\mu_{k^2}^{n,2}}{\omega_{k^2}^{n}}G(\omega_{k^2}^{n},\omega_{k^2+1}^{n})- \frac{\mu_{k^2-1}^{n,2}}{\omega_{k^2-1}^{n}}G(\omega_{k^2-1}^{n},\omega_{k^2}^{n})\right).
\end{array}\right.
\end{equation}

Let us now focus on the cell $J$, which is the only one in which the total density could exceed $\rhomax$ (standard properties of the Godunov scheme apply elsewhere).
\begin{theorem}\label{teo:rhoammissibile2in1}
Let the initial densities around the junction be admissible, namely
\begin{equation*}
\mu_{J-1}^{0,1}\leq\rhomax,\quad 
\mu_{J-1}^{0,2}\leq\rhomax,\quad 
(\mu_{J}^{0,1}+\mu_{J}^{0,2})\leq\rhomax,\quad
(\mu_{J+1}^{0,1}+\mu_{J+1}^{0,2})\leq\rhomax.
\end{equation*}
If the following CFL-like condition holds
\begin{equation}\label{CFLcon2}
2\DtsuDx \sup_{\rho\in(0,\rhomax)}|f'(\rho)|\leq 1,
\end{equation}
then
$$
(\mu_{J}^{n,1}+\mu_{J}^{n,2})\leq\rhomax \qquad \forall n.
$$
\end{theorem}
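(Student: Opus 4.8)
The plan is a discrete maximum–principle argument carried out by induction on $n$. Assume that at time $t^n$ every $\mu^{n,p}_{k^p}$ and every sum $\omega^n_{k^p}$ is bounded by $\rhomax$; we must show the same holds at $t^{n+1}$, and in view of the hypotheses the only non-trivial cell is $J$. Indeed, at all the other cells the scheme \eqref{schema_2in1out} reduces to the standard scalar Godunov scheme: on the interiors of the two incoming roads the coefficients $\mu^{n,p}_{k^p-1}/\omega^n_{k^p-1}$ equal $1$ because there $\omega=\mu^p$, while on the outgoing road, for $k\ge J+1$, adding the two equations and using $\mu^{n,1}_k+\mu^{n,2}_k=\omega^n_k$ shows that the total density $\omega$ itself evolves by the scalar Godunov scheme; in both cases the classical Godunov maximum principle applies, its CFL requirement being weaker than \eqref{CFLcon2}.

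For the cell $J$, add the two equations of \eqref{schema_2in1out} written at $k^1=k^2=J$. Using $\mu^{n,1}_J+\mu^{n,2}_J=\omega^n_J$, the identity $(\mu^{n,1}_J+\mu^{n,2}_J)/\omega^n_J=1$, and the relation $\omega^n_{J-1}=\mu^{n,p}_{J-1}$ valid on the incoming road of path $p$ (the degenerate cases $\omega^n_J=0$ or $\omega^n_{J-1}=0$ being consistent with the convention $\frac{\mu^p}{\omega}=0$, since then the corresponding Godunov flux vanishes as well), the summed equation becomes
\begin{equation*}
\omega^{n+1}_J=\omega^n_J-\DtsuDx\, G(\omega^n_J,\omega^n_{J+1})+\DtsuDx\Big(G(\mu^{n,1}_{J-1},\omega^n_J)+G(\mu^{n,2}_{J-1},\omega^n_J)\Big).
\end{equation*}
Since $G\ge 0$ on $[0,\rhomax]^2$, the outgoing–flux term can be dropped. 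Each incoming term is then bounded by the ``supply'' of cell $J$: a direct inspection of \eqref{GodunovFlux} gives $G(\rhomeno,\rhopiu)\le S(\rhopiu)$ with $S(\rho):=f(\sigma)$ for $\rho\le\sigma$ and $S(\rho):=f(\rho)$ for $\rho>\sigma$, and the concavity of $f$ together with $f(\rhomax)=0$ yields the elementary estimate
\begin{equation*}
S(\rho)\le\Big(\sup_{s\in(0,\rhomax)}|f'(s)|\Big)(\rhomax-\rho)\qquad\text{for all }\rho\in[0,\rhomax]
\end{equation*}
(for $\rho\ge\sigma$ write $S(\rho)=f(\rho)=\int_\rho^{\rhomax}(-f'(s))\,ds$; for $\rho<\sigma$ note $S(\rho)=f(\sigma)\le(\sup|f'|)(\rhomax-\sigma)\le(\sup|f'|)(\rhomax-\rho)$).

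Applying this bound with $\rho=\omega^n_J$ to both incoming fluxes, and then using the induction hypothesis $\omega^n_J\le\rhomax$ and the CFL-like condition \eqref{CFLcon2}, we conclude
\begin{equation*}
\omega^{n+1}_J\le\omega^n_J+\DtsuDx\,2\sup_{s\in(0,\rhomax)}|f'(s)|\,(\rhomax-\omega^n_J)\le\omega^n_J+(\rhomax-\omega^n_J)=\rhomax ,
\end{equation*}
which completes the inductive step (the base case $n=0$ being the admissibility assumption, read on the whole grid). I expect the only point requiring care to be the factor $2$ in \eqref{CFLcon2}: it is exactly what is needed to absorb the two Godunov fluxes entering cell $J$, one per incoming road — the same argument at a junction with $m$ incoming roads would force $m\,\DtsuDx\sup|f'|\le 1$. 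Everything else (the collapse of the summed scheme, the reduction of the ratio coefficients to $1$, and the concavity estimate on $S$) is routine bookkeeping.
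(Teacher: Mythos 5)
Your proof is correct and follows essentially the same route as the paper's: both reduce the problem to the single cell $J$, arrive at the bound $\omega^{n+1}_J\le\omega^n_J+2\DtsuDx\,G(\sigma,\omega^n_J)$ (your supply function $S(\rho)$ is exactly $G(\sigma,\rho)$), and close the induction with the mean-value/concavity estimate $f(\rho)\le\sup|f'|\,(\rhomax-\rho)$ combined with \eqref{CFLcon2}. The only difference is cosmetic: the paper invokes an unproved ``worst case'' ($\mu^{n,p}_{J-1}=\sigma$, $\omega^n_{J+1}=\rhomax$) where you instead drop the nonnegative outgoing flux and bound each incoming flux by $S(\omega^n_J)$ directly, which makes that monotonicity step explicit.
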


\begin{proof}
Let us first prove that \eqref{CFLcon2} implies
\begin{equation}\label{condizioneMaya}
\DtsuDx \leq \inf_{\rho\in[\sigma,\! \ \rhomax)}\frac{\rhomax-\rho}{2f(\rho)}.
\end{equation}
Let us define $M:=\sup_{\rho\in(0,\rhomax)}|f'(\rho)|$. By \eqref{CFLcon2} we have
\begin{equation}\label{lambda<=1su2M}
\DtsuDx\leq\frac{1}{2M}.
\end{equation}
Noting that $f(\rhomax)=0$, and using the Lagrange theorem and \eqref{lambda<=1su2M}, we have
$$
\inf_{\rho\in[\sigma,\! \ \rhomax)}\frac{\rhomax-\rho}{2f(\rho)}=
\inf_{\rho\in[\sigma,\! \ \rhomax)}\frac{|\rhomax-\rho|}{2|f(\rho)-f(\rhomax)|}\geq
\inf_{\rho\in[\sigma,\! \ \rhomax)}\frac{1}{2M}=\frac{1}{2M}\geq
\DtsuDx.
$$
To simplify the notations, let us introduce the auxiliary variable
$z^n:=\omega^n_J=\mu^{n,1}_{J}+\mu^{n,2}_{J}$.
The worst case happens when $\mu_{J-1}^{n,1}=\mu_{J-1}^{n,2}=\sigma$ (incoming roads try to transfer the maximal flux to cell $J$) and $\omega^n_{J+1}=\mu_{J+1}^{n,1}+\mu_{J+1}^{n,2}=\rhomax$ (no flux from cell $J$ to cell $J+1$). The equation for $z$ is
\begin{equation}\label{eq:merge_zn+1}
\begin{split}
z^{n+1}=z^n-\DtsuDx\Big(G(z^n,\omega^n_{J+1}) - G(\mu^{n,1}_{J-1},z^n) - G(\mu^{n,2}_{J-1},z^n) \Big)= \\ 
z^n-\DtsuDx\Big(G(z^n,\rhomax) - G(\sigma,z^n) - G(\sigma,z^n) \Big)= 
z^n+2\DtsuDx G(\sigma,z^n).
\end{split}
\end{equation}
We proceed by induction: Assume that $z^n\leq \rhomax$ and prove that $z^{n+1}\leq \rhomax$.
We have
$$
G(\sigma,z^n)=
\left\{
\begin{array}{ll}
f(\sigma) & \textrm{ if } z^n\leq\sigma, \\
f(z^n)    & \textrm{ if } z^n>\sigma.
\end{array}
\right.
$$
\begin{itemize}
\item CASE 1: $z^n\leq\sigma$\\
We have
$$
z^{n+1}=
z^n+2\DtsuDx f(\sigma)\leq
\sigma+2\DtsuDx f(\sigma).
$$
The conclusion follows easily by \eqref{condizioneMaya}, in particular by the fact that
$$
\DtsuDx \leq \frac{\rhomax-\sigma}{2 f(\sigma)}.
$$
\item CASE 2: $z^n>\sigma$
\begin{itemize}
\item CASE 2.1: $z^n=\rhomax$\\
We have $f(z^n)=f(\rhomax)=0$ and then $z^{n+1}=z^n=\rhomax$.
\item CASE 2.2: $z^n<\rhomax$\\
We have
$$
z^{n+1}=
z^n+2\DtsuDx f(z^n).
$$
The conclusion follows easily by \eqref{condizioneMaya}, in particular by the fact that
$$
\DtsuDx \leq \frac{\rhomax-z^n}{2f(z^n)}\quad\text{for any } z^n \in(\sigma,\! \ \rhomax).
$$
\end{itemize}
\end{itemize}
\end{proof}

\subsubsection{Diverge} 
Let us consider a network with three roads and one junction, with one incoming road and two outgoing roads. On this network two paths $P^1$ and $P^2$ are defined, see Fig.\ \ref{fig:esempio_network_1in2out}.
\begin{figure}[h!]
\begin{center}
\begin{psfrags}
\psfrag{P1}{$P^1$} \psfrag{P2}{$P^2$}
\psfrag{J}{\tiny $J$} \psfrag{Jp}{\tiny $J\!\!+\!\!1$} \psfrag{Jm}{\tiny $J$-1}
\psfrag{I1}{$[a_1,b_1]$} \psfrag{I2}{$[a_2,b_2]$} \psfrag{I3}{$[a_3,b_3]$}
\includegraphics[width=0.45\textwidth]{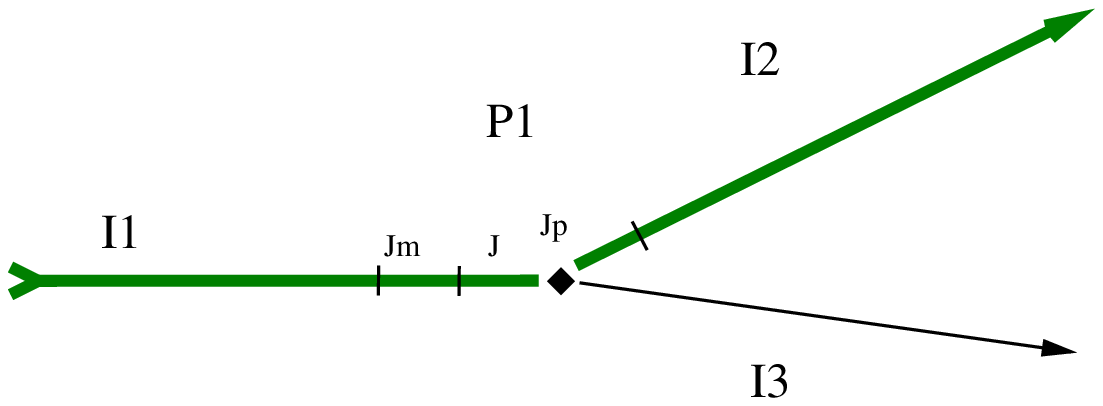}
\includegraphics[width=0.45\textwidth]{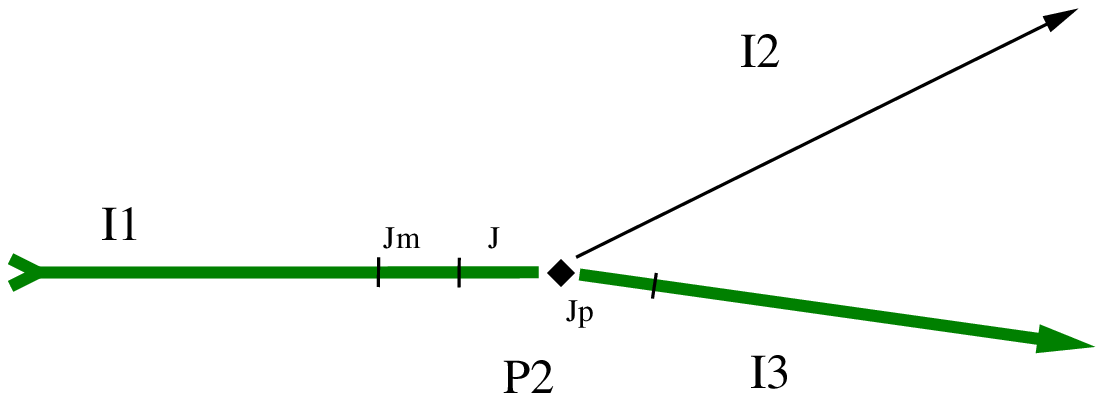}
\end{psfrags}
\end{center}
\caption{A network with 3 arcs and 1 junction, representing a diverge. Path $P^1$ (left) and path $P^2$ (right).}
\label{fig:esempio_network_1in2out}
\end{figure}
We denote by $J$ the cell \emph{just before} the junction. Note that the cells after the junction, namely $J+1$, $J+2$, etc., can refer to one path or the other one, depending on the context.
We have
$$
\omega_{k^1}^{n}=\left\{\begin{array}{ll} 
\mu_{k^1}^{n,1}+\mu_{k^1}^{n,2} & k^1\leq J, \\ 
\mu_{k^1}^{n,1} & k^1> J,\end{array}\right.
\qquad\qquad
\omega_{k^2}^{n}=\left\{\begin{array}{ll} 
\mu_{k^2}^{n,1}+\mu_{k^2}^{n,2} & k^2\leq J, \\ 
\mu_{k^2}^{n,2} & k^2> J,\end{array}\right.
$$
and the scheme (\ref{schema}) becomes equal to \eqref{schema_2in1out}.

Let us now focus on the cell $J$, which is the only one in which the total density could exceed $\rhomax$ (standard properties of the Godunov scheme apply elsewhere).
\begin{theorem}\label{teo:rhoammissibile1in2}
Let the initial densities around the junction be admissible, namely
\begin{equation*}
\mu_{J-1}^{0,1}\leq\rhomax,\quad 
\mu_{J-1}^{0,2}\leq\rhomax,\quad 
(\mu_{J}^{0,1}+\mu_{J}^{0,2})\leq\rhomax,\quad
(\mu_{J+1}^{0,1}+\mu_{J+1}^{0,2})\leq\rhomax.
\end{equation*}
If the \emph{standard} CFL condition holds
\begin{equation}\label{CFLstandard}
\DtsuDx \sup_{\rho\in(0,\rhomax)}|f'(\rho)|\leq 1,
\end{equation}
then
$$
(\mu_{J}^{n,1}+\mu_{J}^{n,2})\leq\rhomax \qquad \forall n.
$$
\end{theorem}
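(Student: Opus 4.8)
The plan is to follow step by step the proof of Theorem~\ref{teo:rhoammissibile2in1}, the key difference being that in the diverge the critical cell $J$ is fed by a \emph{single} upstream cell rather than two, so the factor~$2$ does not appear and the \emph{standard} CFL condition~\eqref{CFLstandard} suffices.

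First I would derive the recursion for the total density at the critical cell. Put $z^n:=\omega^n_J=\mu^{n,1}_J+\mu^{n,2}_J$ and add the two equations of~\eqref{schema_2in1out} written at $k^1=k^2=J$. Since cell $J-1$ is still \emph{before} the junction, $\omega^n_{J-1}=\mu^{n,1}_{J-1}+\mu^{n,2}_{J-1}$, both incoming Godunov fluxes equal $G(\omega^n_{J-1},z^n)$, and the two coefficients $\mu^{n,1}_{J-1}/\omega^n_{J-1}$ and $\mu^{n,2}_{J-1}/\omega^n_{J-1}$ sum to $1$; hence the incoming contribution reduces to the single term $G(\omega^n_{J-1},z^n)$. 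Downstream of the diverge, however, the two paths live on distinct arcs, so the outgoing fluxes are $\frac{\mu^{n,1}_J}{z^n}G(z^n,\mu^{n,1}_{J+1})$ and $\frac{\mu^{n,2}_J}{z^n}G(z^n,\mu^{n,2}_{J+1})$, and they are both nonnegative. Discarding them yields the one-sided bound
\begin{equation*}
z^{n+1}\;\le\;z^n+\DtsuDx\,G(\omega^n_{J-1},z^n).
\end{equation*}

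Next I would single out the worst case exactly as in the merge: the incoming flux $G(\omega^n_{J-1},z^n)$ is maximal over admissible $\omega^n_{J-1}\in[0,\rhomax]$ when $\omega^n_{J-1}=\sigma$, so it is enough to control the recursion $z^{n+1}=z^n+\DtsuDx\,G(\sigma,z^n)$ — formally identical to~\eqref{eq:merge_zn+1} but \emph{without} the factor~$2$. As in the first lines of the previous proof, the Lagrange mean value theorem together with $f(\rhomax)=0$ turns~\eqref{CFLstandard} into
\begin{equation*}
\DtsuDx\;\le\;\frac{1}{M}\;\le\;\inf_{\rho\in[\sigma,\,\rhomax)}\frac{\rhomax-\rho}{f(\rho)},\qquad M:=\sup_{\rho\in(0,\rhomax)}|f'(\rho)|,
\end{equation*}
and I would then close the induction $z^n\le\rhomax\Rightarrow z^{n+1}\le\rhomax$ with the same case split: if $z^n\le\sigma$ then $G(\sigma,z^n)=f(\sigma)$ and $z^{n+1}\le\sigma+\DtsuDx f(\sigma)\le\rhomax$; if $z^n=\rhomax$ then $f(z^n)=0$ and $z^{n+1}=z^n$; if $\sigma<z^n<\rhomax$ then $z^{n+1}=z^n+\DtsuDx f(z^n)\le\rhomax$, using $\DtsuDx\le(\rhomax-\rho)/f(\rho)$ for $\rho\in[\sigma,\rhomax)$.

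I do not expect a genuine obstacle: the argument is essentially a transcription of Theorem~\ref{teo:rhoammissibile2in1}. The only point that needs care is the bookkeeping of which total density enters each Godunov flux — in particular, reading $\omega^n_{J+1}$ as $\mu^{n,1}_{J+1}$ in the path-$1$ equation and as $\mu^{n,2}_{J+1}$ in the path-$2$ equation, since downstream of a diverge the two paths occupy separate roads — and observing that the resulting outgoing fluxes are nonnegative, which is exactly what lets us drop them and end up with the weaker, factor-free CFL requirement~\eqref{CFLstandard}.
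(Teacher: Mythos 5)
Your proposal is correct and follows essentially the same route as the paper: reduce to the recursion $z^{n+1}\le z^n+\DtsuDx\,G(\sigma,z^n)$ at the cell $J$ (the paper realizes the worst case by taking $\omega^n_{J-1}=\sigma$ and $\mu^{n,1}_{J+1}=\mu^{n,2}_{J+1}=\rhomax$, which annihilates the outgoing fluxes, whereas you simply discard them as nonnegative --- same effect), and then close the induction exactly as in Theorem \ref{teo:rhoammissibile2in1} without the factor $2$. No gaps.
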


\begin{proof} 
Let us introduce again the auxiliary variable $z^n:=\omega^n_J=\mu^{n,1}_{J}+\mu^{n,2}_{J}$. 
The worst case happens when $\omega_{J-1}^n=\sigma$ (incoming road tries to transfer the maximal flux to cell $J$) and $\mu_{J+1}^{n,1}=\mu_{J+1}^{n,2}=\rhomax$ (no flux from cell $J$ to cell $J+1$). The equation for $z$ is

\begin{multline*}
z^{n+1}=
z^n-\DtsuDx\left(\frac{\mu^{n,1}_J}{z^n} G(z^n,\mu^{n,1}_{J+1}) + \frac{\mu^{n,2}_J}{z^n} G(z^n,\mu^{n,2}_{J+1}) - G(\omega^n_{J-1},z^n) \right)= \\ 
z^n - \frac{\Delta t}{\Delta x}\left(\frac{\mu^{n,1}_{J}}{z^n} G(z^n,\rhomax) + \frac{\mu^{n,2}_{J}}{z^n} G(z^n,\rhomax) - G(\sigma,z^n)\right) 
=z^n + \frac{\Delta t}{\Delta x} G(\sigma,z^n). 
\end{multline*}
The conclusion follows as in Theorem \ref{teo:rhoammissibile2in1}.
\end{proof}

\subsubsection{General junctions} 
Generalizations to junctions with $r_{\textup{inc}}>2$ incoming roads and one outgoing road or one incoming road and $r_{\textup{out}}>2$ outgoing roads are straightforward. In the former case the solution is admissible if the following CFL-like condition holds true
\begin{equation}\label{CFLconN}
r_{\textup{inc}}\DtsuDx\sup_{\rho\in(0,\rhomax)}|f'(\rho)|\leq 1,
\end{equation}
which takes the place of the condition \eqref{CFLcon2}.
In the latter case the solution is admissible if the standard CFL condition \eqref{CFLstandard} holds true.

The general case with $r_{\textup{inc}}>1$ incoming roads and $r_{\textup{out}}>1$ outgoing roads can be also easily solved by means of the ingredients discussed above. To fix the ideas, let us consider the case with $r_{\textup{inc}}=2$ and $r_{\textup{out}}>1$, and focus on the node $J$ just after the junction along one of the outgoing roads. We can now safely extract the subnetwork formed by the considered outgoing road and the two incoming roads. On this subnetwork the situation is very similar to the one discussed in Section \ref{sec:merge}, the only difference is that this time $\mu^{n,p}_{J-1} \neq \omega^n_{J-1}$, $p=1,2$, since along the incoming roads there are vehicles moving to the outgoing roads outside the subnetwork. Since $\frac{\mu^{n,p}_{J-1}}{\omega^n_{J-1}}\leq 1$, we can write (cf.\ \eqref{eq:merge_zn+1})
\begin{equation*}
\begin{split}
z^{n+1}=z^n-\DtsuDx\left(G(z^n,\omega^n_{J+1}) - \frac{\mu^{n,1}_{J-1}}{\omega^n_{J-1}}G(\mu^{n,1}_{J-1},z^n) - \frac{\mu^{n,2}_{J-1}}{\omega^n_{J-1}}G(\mu^{n,2}_{J-1},z^n) \right)\leq 
\\ 
z^n-\DtsuDx\left(G(z^n,\omega^n_{J+1}) - G(\mu^{n,1}_{J-1},z^n) - G(\mu^{n,2}_{J-1},z^n) \right)
\end{split}
\end{equation*}
and then we conclude as in Section \ref{sec:merge}.

Summarizing, condition \eqref{CFLconN} guarantees the admissibility of the solution for any kind of junction.

\section{Similarities to models with buffer}\label{sec:buffer}
In this section we point out some analogies and differences between our model and traffic models with buffer. This also should clarify why in our approach there are no special management of the dynamics at junctions.

Let us first focus on the case of a simple merge, see Section \ref{sec:merge} and \eqref{schema_2in1out}.
Recalling that 
\begin{equation}\label{proprieta_base_G}
G(\rhomeno,\rhopiu)=\min\{G(\rhomeno,\sigma), \ G(\sigma,\rhopiu)\},\qquad \text{for any } \rhomeno,\ \rhopiu\in[0,\rhomax],
\end{equation}
we get that at the cell $J$ just after the junction, the sum $\omega^n_J$ of the two densities $\mu^{n,1}_J$ and $\mu^{n,2}_J$ satisfies the discrete equation
\begin{eqnarray}
\omega_J^{n+1} &=& \omega_J^{n}-\frac{\Delta t}{\Delta x}
(\Gamma_{\textup{out}}-\Gamma_{\textup{in}}), 
\label{2in1_incrocio_J_buffer}
\end{eqnarray}
with
\begin{eqnarray*}
\Gamma_{\textup{out}} &:=& \min\{G(\omega_J^n,\sigma),G(\sigma,\omega^n_{J+1})\},\\
\Gamma_{\textup{in}}  &:=& \min\{G(\mu_{J-1}^{n,1},\sigma),G(\sigma,\omega_J^n)\}+\min\{G(\mu_{J-1}^{n,2},\sigma),G(\sigma,\omega_J^n)\}.
\end{eqnarray*}
The cell $J$ may be seen as an area with an oversize capacity (up to $2f(\sigma)$) which gathers the flows coming from the incoming roads. We may therefore say that the cell $J$ acts as a ``buffer'', i.e.\ a region of size $\Dx$ used to temporarily store vehicles while they are being moved from the incoming roads to the outgoing road. The road then comes back to its original capacity in the cell $J+1$.
In \cite{bressan2014preprint, garavello2012DCDS, garavello2013bookchapt, herty2009NHM} traffic models with buffer are proposed. In such models, the buffer is 0-dimensional, and its load is described by the number of cars $r(t)$ lying at time $t$ in it. The load $r$ varies according to the difference between the inflow and the outflow at the buffer and evolves according to a dedicated ODE. It is also assumed that the maximum number of cars which can enter or exit the buffer per unit of time is a constant parameter.

In the same spirit of \cite{bressan2014preprint, garavello2012DCDS, herty2009NHM}, in our modeling approach the function $\omega_J^n$ may be assumed to describe the evolution of cars densities in the buffer,
i.e.\ using the finite volume approach, we get
$$ \omega_J(t)=\frac{1}{\Dx}\int_0^\Dx z(x,t) dx,$$
where $z(x,t)$ is the densities of cars lying at time $t$ and $x\in J$. 
Equation \eqref{2in1_incrocio_J_buffer} can then be seen as the first-order Euler approximation of the following ODE, 
\begin{equation*}
\frac{d}{dt}\int_0^\Dx z(x,t) dx=\Gamma_{\textup{in}}-\Gamma_{\textup{out}},
\end{equation*}
which expresses nothing but the conservation of vehicles in the buffer, and can be directly compared with \cite[Eq.\ (2.9)]{bressan2014preprint}.
\emph{So, we may say that the numerical scheme \eqref{schema} embeds an approximation of the buffer dynamics}.

Let us now take a look at functions $\Gamma_{\textup{in}}$ and $\Gamma_{\textup{out}}$.
We observe that they are in analogy with the incoming and outgoing fluxes at the buffer proposed in \cite{bressan2014preprint}, while they differ from those proposed in \cite{garavello2012DCDS, herty2009NHM}. In our approach the demand of the buffer differs from the supply of the buffer and they change in time taking into account the current densities lying in the buffer. Indeed, in Section \ref{sec:modified_equation}, we shall define a \textit{modified problem}, where a finite-size buffer appears explicitly. In particular, we will build up a function $h$ that describes in a continuous way the buffer's capacity. 

All those arguments can be extended to any kind of junction. Going back to the general scheme \eqref{schema}, we observe that at any cell $J$ just after the junction along some outgoing road, the densities depend on the non-constant rates ${\mu_{J-1}^{n,p}}/{\omega_{J-1}^{n}}$ before the junction, which gives exactly the fraction of drivers who wish to turn into the considered outgoing road from one of the incoming roads. 
Then, we end up with a \textit{multibuffer} junction, see \cite{bressan2014preprint,garavello2013bookchapt}. Each outgoing road has a buffer and the incoming fluxes at the buffer depend on the percentages of vehicles that from each incoming road turn into the outgoing road. 
%
%
\section{The case of a merge: Numerical solution and modified problem}\label{sec:approfondimento2in1}
In this section we focus on a simple merge, i.e.\ a network with three roads and one junction, with two incoming roads and one outgoing road. On this network two paths $P^1$ and $P^2$ are defined, see Fig.\ \ref{fig:network_2in_1out}.

After the junction, it is convenient dealing with the sum $\omega^n$ of the two densities $\mu^{n,1}$ and $\mu^{n,2}$ rather than the two densities separately. Then, considering again the system \eqref{def_omega_2in1}--\eqref{schema_2in1out}, across the junction we have the following equations:
\begin{eqnarray}
\mu_{J-1}^{n+1,p} &=& \mu_{J-1}^{n,p}-\frac{\Delta t}{\Delta x}\Big(G(\mu_{J-1}^{n,p},\omega_{J}^{n}) - G(\mu_{J-2}^{n,p},\mu_{J-1}^{n,p})\Big), \quad p=1,2,
\label{2in1_allincrocio_Jm1} \\
\omega_J^{n+1} &=& \omega_J^{n}-\frac{\Delta t}{\Delta x}
\Big(G(\omega_J^{n},\omega_{J+1}^{n})- 
\big(G(\mu_{J-1}^{n,1},\omega_{J}^{n})+G(\mu_{J-1}^{n,2},\omega_{J}^{n})\big)\Big).
\label{2in1_allincrocio_J}
\end{eqnarray}

Let us introduce four constants $\ul, \vl, \zc, \wr$ which will be used in the following as initial/boundary conditions before, at, and after the junction. We assume that 
\begin{equation}\label{assumption_uvzw}
\ul,\vl\in(0,\sigma),\qquad \wr\in(\sigma,\rhomax).
\end{equation}
We also consider in the plane $(f(\ul),f(\vl))$ the four regions (A), (B), (B$^\prime$), (C) depicted in Fig.\ \ref{fig:2in1out_regioni} and characterized by the relations
\begin{equation}\label{regions}
\left.
\begin{array}{ll}
(\textup A) & f(\ul)+f(\vl)<f(\wr), \\ [2mm]
(\textup B) & f(\ul)+f(\vl)>f(\wr), \quad f(\vl)<\frac{f(\wr)}{2}, \\ [2mm]
(\textup B^\prime) & f(\ul)+f(\vl)>f(\wr), \quad f(\ul)<\frac{f(\wr)}{2}, \\ [2mm]
(\textup C) &f(\ul)>\frac{f(\wr)}{2},\quad f(\vl)>\frac{f(\wr)}{2}.
\end{array}
\right.
\end{equation}
\begin{figure}[h!]
\begin{center}
\begin{psfrags}
\psfrag{fs}{\hskip-6pt$f(\sigma)$} \psfrag{fb}{\hskip-6pt$f(\beta)$} \psfrag{fa1}{$f(\alphauno)$} \psfrag{fa2}{\hskip-8pt$f(\alphadue)$} \psfrag{fb2}{\hskip-3pt$\frac{f(\beta)}{2}$}\psfrag{(A)}{(A)}\psfrag{(B)}{(B)}\psfrag{(C)}{(B$^\prime$)}\psfrag{(D)}{(C)}
\includegraphics[width=0.5\textwidth]{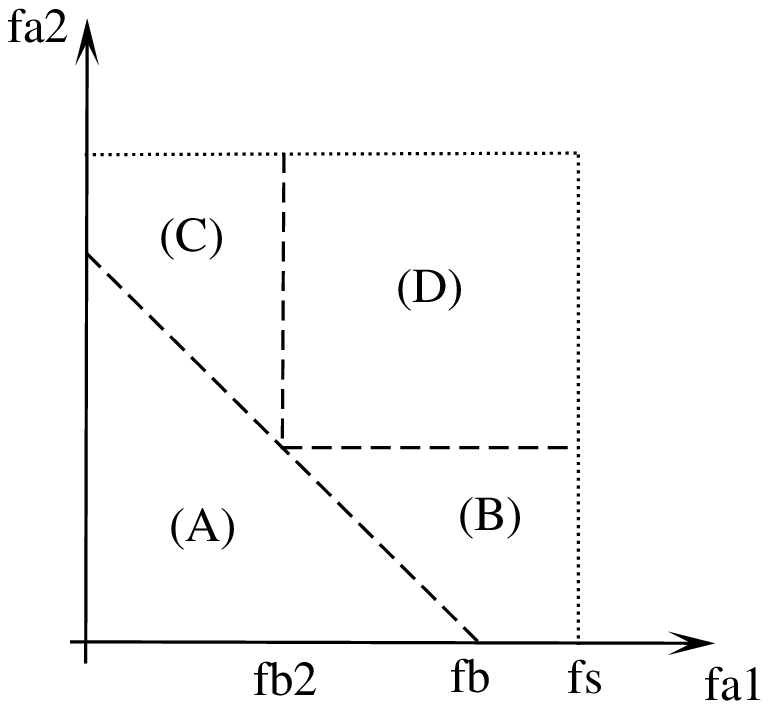}
\end{psfrags}
\end{center}
\caption{Plane $(f(\alphauno),f(\alphadue))$. Regions (A),
(B), (B$^\prime$), and (C) defined in \eqref{regions}.}
\label{fig:2in1out_regioni}
\end{figure}
For symmetry reasons, we will restrict our attention only to regions (A), (B), and (C). Note that the case (A) corresponds to no formation of queues, the case (B) corresponds to the formation of a queue along $P^1$, and the case (C) corresponds to the formation of two queues along $P^1$ and $P^2$.
In addition, we assume that
\begin{equation}\label{zc=sigma}
\zc=\sigma.
\end{equation}
Assumptions \eqref{assumption_uvzw} and \eqref{zc=sigma} are mainly technical and could be relaxed. They are taken to ease the proofs of the next Theorems \ref{teo:puntistazionari} and \ref{teo:RPteorico}, where the continuous and the discrete problem are analyzed and compared.
Indeed, we still cover all the interesting cases (free and congested state, queue formation) while limiting the number of cases to be studied, otherwise astronomical. 

Let us also introduce the following notation, which will be used through the paper.
\begin{notation}\label{not:tau_di}
Let $f$ be a flux function satisfying \eqref{proprieta_f}. For any density $\rho\in[0,\rhomax]$, we denote by $\rho^\compl$ the unique density such that
\begin{equation}\label{tau}
\rho^\compl\neq\rho \quad\textrm{ and }\quad f(\rho^\compl)=f(\rho).
\end{equation}
\end{notation}
For example, if $f(\rho)=\rho(1-\rho)$ with $\rhomax=1$, then $\rho^\compl=1-\rho$.

\subsection{Asymptotic numerical solution}\label{sec:Riemann_numerico_2in1}
Let us focus on the behaviour of the scheme at the junction, considering only the two cells $J-1$ and the cell $J$, see  \eqref{2in1_allincrocio_Jm1}--\eqref{2in1_allincrocio_J}. We set the following boundary conditions:
\begin{equation}\label{CB_2in1out}
\forall n\geq 0,\qquad \mu_{J-2}^{n,1}=\alphauno,\qquad \mu_{J-2}^{n,2}=\alphadue, \qquad \mu_{J+1}^{n,1}=\beta^1, \qquad \mu_{J+1}^{n,2}=\beta^2,
\end{equation}
with $\wr:=\wr^1+\wr^2$.
The next result concerns the stationary solutions of the scheme.
\begin{theorem}\label{teo:puntistazionari}
Consider the scheme at cells $J-1$ and $J$, namely \eqref{2in1_allincrocio_Jm1}--\eqref{2in1_allincrocio_J}, with boundary conditions \eqref{CB_2in1out} satisfying \eqref{assumption_uvzw}. 
Then, the unique stationary solution $(\bar\mu^{1}_{J-1},\bar\mu^{2}_{J-1},\bar\omega_J=\bar\mu^{1}_J+\bar\mu^{2}_J)$ is determined by the following conditions:
\begin{itemize}
\item \textup{CASE (A)}
\begin{equation}\label{puntostazionarioA}
\left\{
\begin{array}{l}
\bar\mu^{1}_{J-1}=\alphauno \\ [1mm]
\bar\mu^{2}_{J-1}=\alphadue\\ [1mm]
\bar\omega_J<\sigma, \textrm{ s.t. } f(\bar\omega_J)=f(\alphauno)+f(\alphadue).
\end{array}
\right.
\end{equation}
\item \textup{CASE (B)}
\begin{equation}\label{puntostazionarioB}
\left\{
\begin{array}{l}
\bar\mu^{1}_{J-1}>\sigma, \textrm{ s.t. } f(\bar\mu^{1}_{J-1})=f(\beta)-f(\alphadue)\\ [1mm]
\bar\mu^{2}_{J-1}=\alphadue\\ [1mm]
\bar\omega_{J}>\sigma, \textrm{ s.t. } f(\bar\omega_{J})=f(\beta)-f(\alphadue).
\end{array}
\right.
\end{equation}
\item \textup{CASE (C)}
\begin{equation}\label{puntostazionarioD}
\bar\mu^{1}_{J-1}=\bar\mu^{2}_{J-1}=\bar\omega_{J}>\sigma, \textrm{ s.t. } 
f(\bar\mu^{1}_{J-1})=f(\bar\mu^{2}_{J-1})=f(\bar\omega_{J})=\frac{f(\beta)}{2}.
\end{equation}
\end{itemize}
Moreover, one can compute $\bar\mu_J^1$ and $\bar\mu_J^2$ by means of $\bar\omega_J$ as follows:
\begin{itemize}
\item \textup{CASE (A)}$\quad$ $\bar\mu_J^{1}=\frac{f(\alphauno)}{f(\alphauno)+f(\alphadue)}\bar\omega_J,\quad \bar\mu_J^2=\bar\omega_J-\bar\mu_J^1$. \\ [1mm]
\item \textup{CASE (B)}$\quad$ $\bar\mu_J^{1}=\frac{f(\beta)-f(\alphadue)}{f(\beta)}\bar\omega_J,\quad \bar\mu_J^2=\bar\omega_J-\bar\mu_J^1$. \\ [1mm]
\item \textup{CASE (C)}$\quad$ $\bar\mu^{1}_{J}=\bar\mu^{2}_{J}=\bar\omega_{J}/2$.
\end{itemize}
\end{theorem}
\noindent The proof is given in the Appendix \ref{app:A1}.
\begin{remark}
As already noted in \cite{bretti2013DCDS-S}, when queues are formed along both incoming roads, the density split in two equal values, regardless the ratio between the boundary conditions $\ul$ and $\vl$. In the classical approaches based on the LWR model \cite{piccolibook}, this effect is achieved by maximizing the flux through the junction and setting the priorities coefficients to $\frac12$ (incoming fluxes are equidistributed). See \cite{bretti2013DCDS-S} for a detailed numerical comparison between the multi-path scheme and the Godunov scheme for the classical model.
\end{remark}
The next result concerns the stability of the stationary points exhibited in Theorem \ref{teo:puntistazionari}.
\begin{theorem}\label{teo:stability}
Let the CFL condition \eqref{CFLstandard} hold true. Then, the three stationary points \eqref{puntostazionarioA},\eqref{puntostazionarioB},\eqref{puntostazionarioD} are locally asymptotically stable under the respective conditions. 
\end{theorem}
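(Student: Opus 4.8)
The plan is to analyze the local stability of each stationary point by linearizing the relevant discrete dynamical system and showing that the resulting iteration matrix (or scalar multiplier) is contractive under the CFL condition \eqref{CFLstandard}. The state variables are $(\mu^1_{J-1},\mu^2_{J-1},\omega_J)$, governed by the three-component map defined by \eqref{2in1_allincrocio_Jm1}--\eqref{2in1_allincrocio_J} with the boundary data \eqref{CB_2in1out}. Since $\zc=\sigma$ and the boundary conditions $\ul,\vl\in(0,\sigma)$, $\wr\in(\sigma,\rhomax)$ are fixed, the map is autonomous, and near each stationary point the Godunov flux $G$ is a smooth (indeed piecewise-linear-in-$f$) function whose relevant branch is determined by the sign conditions characterizing the region; the key simplification is that, at a stationary point, each argument of $G$ sits on a definite side of $\sigma$, so locally $G$ reduces to one of the explicit branches in \eqref{GodunovFlux}.

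First I would treat CASE (C), which is the cleanest: there $\bar\mu^1_{J-1}=\bar\mu^2_{J-1}=\bar\omega_J>\sigma$, all density values lie above $\sigma$, and the outgoing Godunov flux $G(\omega_J,\omega_{J+1})$ at the junction and the incoming fluxes $G(\mu^p_{J-1},\omega_J)$ all fall into the branch $f(\cdot\text{ of the upstream value})$ — more precisely, since upstream densities exceed $\sigma$ and downstream states are $\geq\sigma$, each $G$ evaluates to $f$ of the larger-than-$\sigma$ argument, which by concavity is decreasing. Computing the Jacobian of the right-hand side, each partial derivative of the flux terms is bounded in absolute value by $\frac{\Dt}{\Dx}\sup|f'|$, and by the structure of \eqref{2in1_allincrocio_Jm1}--\eqref{2in1_allincrocio_J} (each equation has the familiar Godunov ``$1 - \lambda(\partial_{\text{out}} - \partial_{\text{in}})$'' form with nonnegative contributions) the Jacobian is, up to sign normalization, a substochastic-type matrix whose spectral radius is $<1$ exactly when \eqref{CFLstandard} holds strictly. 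I would exhibit this matrix explicitly — it is lower/upper triangular in a suitable ordering because $\mu^p_{J-1}$ depends only on $\mu^p_{J-1}$, $\mu^p_{J-2}=\ul$ or $\vl$ (constant), and $\omega_J$ — and read off its eigenvalues, each of the form $1-\lambda|f'(\cdot)|$ or similar, all lying in $(-1,1)$ under \eqref{CFLstandard}.

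Next I would handle CASE (A) and CASE (B) in the same way, being careful about which branch of $G$ is active. In CASE (A) the stationary $\bar\omega_J<\sigma$ and $\bar\mu^p_{J-1}=\ul$ (resp. $\vl$) are below $\sigma$; the incoming fluxes $G(\mu^p_{J-1},\omega_J)=\min\{f(\mu^p_{J-1}),f(\omega_J)\}$ must be resolved using $f(\ul)+f(\vl)<f(\wr)$, and the outgoing flux involves $G(\omega_J,\omega_{J+1})$ with $\omega_{J+1}=\wr>\sigma$, giving $f(\omega_J)$ on the relevant branch — one must check the congested/free dichotomy carefully here, as the active branch of $G$ can flip depending on the ordering of $\ul,\vl,\omega_J$. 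In CASE (B) a queue forms along $P^1$, so $\bar\mu^1_{J-1}>\sigma$ while $\bar\mu^2_{J-1}=\vl<\sigma$, a genuinely mixed situation; one component of the linearized system is decreasing-in-$f$ and another is increasing, but the contraction estimate $|1-\lambda|f'||<1$ still applies term by term. In all three cases, after fixing the branches, the Jacobian entries are $O(\lambda)$ with $\lambda\sup|f'|<1$, and the ``diagonal'' entries are of the form $1-\lambda|f'|$, yielding the desired strict inequality on eigenvalues.

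The main obstacle will be the careful branch-tracking for the Godunov flux in CASES (A) and (B): one must verify that at (and in a neighborhood of) the stationary point, none of the $G$-arguments sits exactly at the kink $\sigma$ in a way that makes $G$ non-differentiable, and that the correct branch of the $\min$ in \eqref{GodunovFlux} is selected consistently. In the degenerate sub-cases where an argument equals $\sigma$ (e.g. $\zc=\sigma$ at the stationary point, or $\bar\omega_J$ landing on $\sigma$), I would argue by one-sided derivatives, noting that the relevant one-sided slopes still satisfy the contraction bound, so stability persists; alternatively I would invoke that $f$ is $C^1$ on $(0,\rhomax)$ and the flux $G$ is Lipschitz with constant $\sup|f'|$ everywhere, so a Lyapunov/contraction argument in the $\ell^1$ norm on the increments gives asymptotic stability without needing differentiability at the kink. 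Since the excerpt ends at the statement and the paper defers the proof to an appendix, I would present the linearization computation for the generic branches in the main text and relegate the kink sub-cases to the appendix, exactly as the authors indicate.
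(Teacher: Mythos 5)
Your proposal follows essentially the same route as the paper's Appendix A.2: substitute the locally active branch of the Godunov flux at each stationary point, obtain a triangular Jacobian for the map in $(x,y,z)=(\mu^1_{J-1},\mu^2_{J-1},\omega_J)$, and read off eigenvalues of the form $1\pm\DtsuDx f'(\cdot)$ whose moduli are below $1$ under \eqref{CFLstandard}. The only detail to watch is that in case (C) the third eigenvalue is $1+2\DtsuDx f'(z)$ (the outgoing cell receives two incoming fluxes), which still lies in $(-1,1)$ under the standard CFL condition since $f'(z)<0$ there, so your conclusion stands.
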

\noindent The proof is given in the Appendix \ref{app:A2}.
Note that the CFL-like condition \eqref{CFLcon2} needed for the applicability of the scheme implies the standard CFL condition \eqref{CFLstandard}.
\begin{remark}
Borderline cases are relatively easy to study. The case $\beta=\sigma$ is essentially analogous to the case $\beta>\sigma$ and all the results described above are still valid. Instead, a more complicated situation arises by assuming that the boundary conditions $\alphauno$, $\alphadue$, and $\beta=\beta^1+\beta^2$ are chosen in such a way that the point $(f(\alphauno), f(\alphadue))$ lies at the boundaries of all four regions depicted in Fig.\ \ref{fig:2in1out_regioni}. In that case, infinite stationary points are present. Indeed, if \eqref{assumption_uvzw} holds true and
$$
f(\alphauno)=f(\alphadue)=\frac{f(\beta)}{2}
$$
we necessarily have $\alphauno=\alphadue$ and all the points $(\bar\mu^{1}_{J-1},\bar\mu^{2}_{J-1},\bar\mu^{1}_J,\bar\mu^{2}_J)$
such that
$$
\bar\mu_{J-1}^1=\alphauno, \quad
\bar\mu_{J-1}^2=\alphauno, \quad
\bar\mu_{J}^1=\bar\mu_J^2=\frac{\bar\omega_J}{2}, \textrm{ with } \bar\omega_J\in[\beta^\compl,\alphauno^\compl]
$$ are stationary, but not in general stable.
\end{remark}

\subsection{The modified equation}\label{sec:modified_equation}
In this section we solve an inverse problem, in analogy with the ``modified equation'' in the sense of LeVeque \cite[Sect.\,11.1]{levequebook}. More precisely, we find a new set of equations, compatible with the classical theory of traffic flow on networks (where a single equation is solved on any arc of the graph) \cite{piccolibook}, such that the solution of the Riemann problem associated to those equations coincide with the solution of the numerical scheme exhibited in Section \ref{sec:Riemann_numerico_2in1}. 

In order to introduce the modified equation, it is convenient to slightly reformulate the problem assuming roads to be half lines and changing notations. We denote by $u(x,t):(-\infty,0]\times[0,+\infty)\to [0,\rhomax]$ and $v(x,t): (-\infty,0]\times[0,+\infty)\to [0,\rhomax]$ the densities along the first and second incoming roads, respectively, by $z(x,t):[0,\Dx]\times[0,+\infty)\to [0,\rhomax]$ the density inside the cell $J$ just after the junction, and by $w(x,t):[\Dx,+\infty)\times[0,+\infty)\to [0,\rhomax]$ the density along the outgoing road.
Then we face the following problem
\begin{equation}\label{pb_riemann_gen}
\left\{
\begin{array}{ll}
\frac{\partial}{\partial t}u + \frac{\partial}{\partial x}f(u)=0, & x<0, \quad t>0,\\ [2mm]
\frac{\partial}{\partial t}v + \frac{\partial}{\partial x}f(v)=0, & x<0, \quad t>0,\\ [2mm]
\frac{\partial}{\partial t}z + \frac{\partial}{\partial x}h(z,x,t;u,v,w)=0, & 0<x<\Dx, \quad t>0, \\ [2mm]
\frac{\partial}{\partial t}w + \frac{\partial}{\partial x}f(w)=0, & x>\Dx, \quad t>0,
\end{array}
\right.
\end{equation}
where
\begin{equation}\label{def:h(x,t)}
h(z,x,t; u,v,w):=
f(z) + C(x,t; u^-(t), v^-(t), w^+(t)),
\end{equation}
the function $0\leq C \leq f(\sigma)$ will be chosen in the following, and we have defined $u^-(t):=u(0-,t)$, $v^-(t):=v(0-,t)$, and $w^+(t):=w(\Dx+,t)$.

Let us comment the choice of the flux in \eqref{def:h(x,t)}. It is just a translation of the original flux $f$, since it has the form $h=f+C$, where $C$ depends on space, time, and the time-varying traces of the densities along the roads connected to the junction. Note that we do not require $h=0$ for $z=0,\rhomax$, then $h$ is not in general a physical flux. 
The choice of $h$ translates the fact that \textit{the capacity of the junction is variable in space and time, and it is automatically adjusted on the basis of the demand of the incoming roads and the supply of the outgoing road}. After the junction, the road comes back to its original size by means of a bottleneck, see Fig.\ \ref{fig:pbmodificato_2in1}. 
\begin{figure}[h!]
\begin{center}
\begin{psfrags}
\psfrag{rho1}{$u$}
\psfrag{rho2}{$\ \ v$}
\psfrag{rhoj}{$\ z$}
\psfrag{rho3}{$w$}
\psfrag{Dx}{$\!\!\!\!\!\!\!\!\leftarrow\Dx\rightarrow$}
\includegraphics[width=0.3\textwidth]{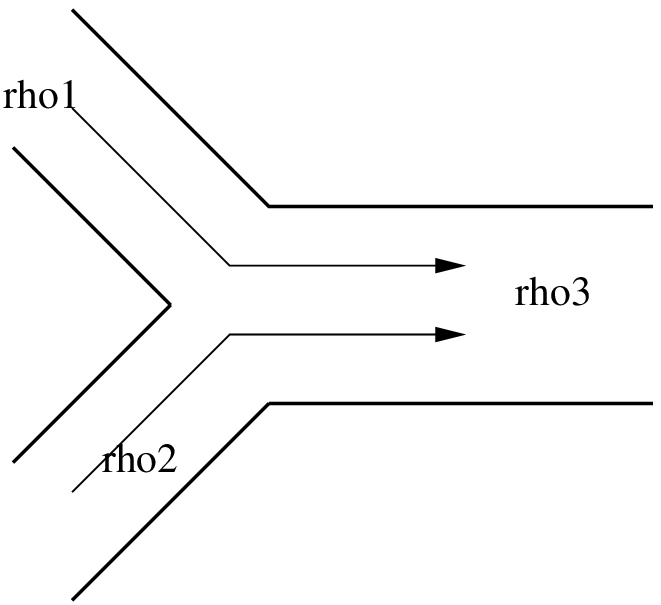}\qquad\qquad
\includegraphics[width=0.35\textwidth]{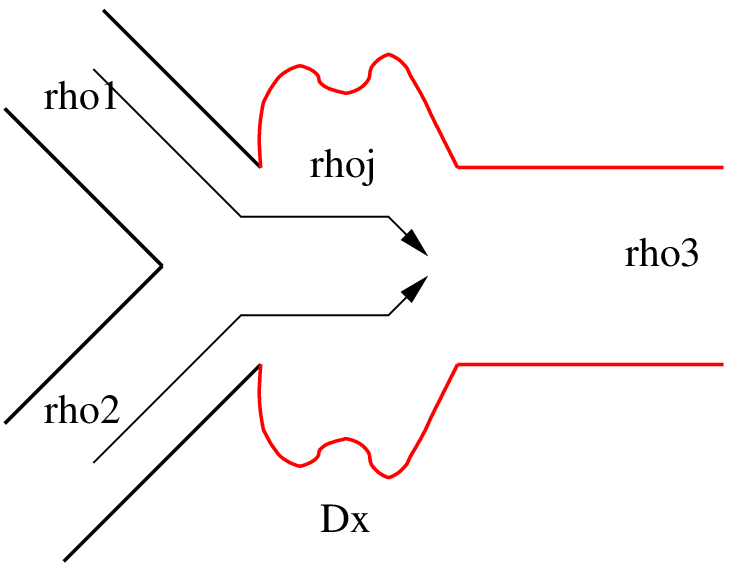}
\end{psfrags}
\end{center}
\caption{Original problem (left) and modified problem (right).}
\label{fig:pbmodificato_2in1}
\end{figure}

To be coherent with the numerical investigation, we set the initial data as
\begin{equation}\label{pb_riemann_gen_init_data}
u(x,0)\equiv\ul, \quad
v(x,0)\equiv\vl, \quad
z(x,0)\equiv\zc, \quad 
w(x,0)\equiv\wr,
\end{equation}
with $\ul,\vl,\zc,\wr \in [0,\rhomax]$ and satisfying assumptions \eqref{assumption_uvzw} and \eqref{zc=sigma}.

In the following we show that, for a particular choice of $C$, the solution to the Riemann problem associated to equations \eqref{pb_riemann_gen}--\eqref{pb_riemann_gen_init_data} equals the solution of the numerical scheme around the junction, namely the scheme described in Section \ref{sec:scheme} is \textit{consistent} with equations \eqref{pb_riemann_gen}--\eqref{def:h(x,t)} for a special choice of $h$. 

We define a solution to the Riemann problem as follows (cfr.\ \cite[Def.\ 4.4.1]{piccolibook}):
\begin{definition}\label{def:J_riemann_sol} We say that a set of smooth states 
determine a solution to the Riemann problem \eqref{pb_riemann_gen}--\eqref{pb_riemann_gen_init_data} if
\begin{itemize}
\item[(R1)] The waves generated between two adjacent constant states at $x=0-$ and $x=\Dx-$ have negative speed.
\item[(R2)] The waves generated between two adjacent constant states on $x=0+$ and $x=\Dx+$ have positive speed.
\item[(R3)] (conservation of flux at $x=0$)
For all $t\geq 0$, 
\begin{equation}\label{boundary_0}
f(u(0-,t))+f(v(0-,t)) = h(z(0+,t),0+,t).
\end{equation}
\item[(R4)] (conservation of flux at $x=\Dx$) 
For all $t\geq 0$,
\begin{equation}\label{boundary_Dx}
h(z(\Dx-,t),\Dx-,t) = f(w(\Dx+,t))
\end{equation}
\end{itemize}
\end{definition}
\noindent (see Appendix \ref{sec:Riemann_and_half-Riemann_intro} for basic notions about Riemann and half-Riemann problems). 
Generally, this definition does not select a unique solution to the problem \eqref{pb_riemann_gen}--\eqref{pb_riemann_gen_init_data}. 
To fix this, we select particular fluxes at $x=0,\Dx$ somehow ``suggested'' by the numerical scheme, and we study the corresponding evolution of the density. Doing this, we verify that the Riemann problem is not degenerate (waves do not rebound forever) and we compute the constant asymptotic solution for $t\to +\infty$.

We also assume that at the initial time $t=0$, for any $x\in[0,\Dx]$, we have
\begin{equation}\label{def:hc_primavolta}
h(z,x,0)=h_c(z):=f(z)+f(\sigma),
\end{equation}
(or, equivalently, $C(x,0)\equiv f(\sigma)$), meaning that the maximal capacity of the junction $[0,\Delta x]$ doubles the maximal capacity of the other roads. Note that assumptions \eqref{zc=sigma} and \eqref{def:hc_primavolta} translate the fact that at the initial time the junction area itself is not responsible for a possible congestion, while, if any, this will be due to the choice of $\wr$.
\begin{theorem}\label{teo:RPteorico}
Consider the problem \eqref{pb_riemann_gen}--\eqref{pb_riemann_gen_init_data}, where at any time $t>0$ the choice of fluxes at junction is given by:
\begin{equation}\label{choice_at_junction_in}
\begin{split}
\text{Flux at $x=0$:}  \qquad
\min\{G_f(u(0-,t),\sigma),G_f(\sigma ,z(0+,t))\} + \\  
\min\{G_f(v(0-,t),\sigma),G_f(\sigma ,z(0+,t))\}
\end{split}
\end{equation}
\begin{eqnarray}\label{choice_at_junction_out}
\textrm{\ \ \ \ \ \ \ \ Flux at $x=\Dx$:} \quad \ \min\{G_h(z(\Dx-,t),\sigma), G_f(\sigma,w(\Dx+,t))\}
\end{eqnarray}
where $G_\cdot(\cdot,\cdot)$ is the Godunov flux defined in \eqref{GodunovFlux} and $h$ has the form \eqref{def:h(x,t)}. Then there exists a function $C$ such that the following holds: (i)
For every initial data $\ul,\vl,\zc,\wr$ satisfying \eqref{assumption_uvzw} and \eqref{zc=sigma} there exists one solution (in the sense of Definition \ref{def:J_riemann_sol}) of the problem. (ii) By assuming the initial data to be in the regions (A), (B), (C) defined in \eqref{regions}, the following constant quadruplets $(\bar u,\bar v,\bar z,\bar w)$ are, respectively, asymptotic stationary solutions of the problem.
\begin{itemize}
\item \textup{CASE (A)}
\begin{equation*}
\left\{
\begin{array}{l}
\bar u=\ul \\
\bar v=\vl \\
\bar z<\sigma, \textrm{ s.t. } f(\bar z)=f(\ul)+f(\vl) \\
\bar w<\sigma, \textrm{ s.t. } f(\bar w)=f(\ul)+f(\vl).
\end{array}
\right.
\end{equation*}
\item \textup{CASE (B)}
\begin{equation*}
\left\{
\begin{array}{l}
\bar u > \sigma,  \textrm{ s.t. } f(\bar u)=f(\wr)-f(\vl)\\
\bar v=\vl \\
\bar z > \sigma,  \textrm{ s.t. } f(\bar z)=f(\wr)-f(\vl) \\
\bar w=\wr.
\end{array}
\right.
\end{equation*}
\item \textup{CASE (C)}
\begin{equation*}
\left\{
\begin{array}{l}
\bar u=\bar v = \bar z >\sigma \textrm{ s.t. } f(\bar u)=f(\bar v)=f(\bar z)=\frac{f(\wr)}{2} \\
\bar w=\wr.
\end{array}
\right.
\end{equation*}
\end{itemize}
\end{theorem}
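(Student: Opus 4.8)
The plan is to build the function $C$ (equivalently $h$) dynamically, following exactly the evolution that the numerical scheme performs around the junction, and then verify that the resulting wave configuration satisfies (R1)--(R4) and converges to the asserted quadruplets. First I would set up the half-Riemann machinery: by (R1)--(R2) the traces $u^-(t)$, $v^-(t)$, $z(0+,t)$, $z(\Dx-,t)$, $w^+(t)$ determine, via the solution of the half-Riemann problems on the four segments, the admissible incoming/outgoing fluxes (demand and supply functions, as recalled in Appendix \ref{sec:Riemann_and_half-Riemann_intro}). The prescribed fluxes \eqref{choice_at_junction_in}--\eqref{choice_at_junction_out} are precisely the min of the relevant demand/supply terms, so the point is to show these choices are \emph{compatible}: i.e.\ there exist traces realizing them that produce only waves of the correct sign. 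Since $z$ lives on the tiny interval $[0,\Dx]$, the key structural fact is that $z$ becomes spatially constant after a short transient (a wave crosses $[0,\Dx]$ in finite time), so that $z(0+,t)=z(\Dx-,t)=:z(t)$ for $t$ large, and the modified flux $h(z,\cdot,t)=f(z)+C(t)$ is effectively space-independent there. This reduces the junction to a single cell obeying exactly the discrete-in-spirit ODE $\frac{d}{dt}\!\int_0^\Dx z\,dx=\Gamma_{\textup{in}}-\Gamma_{\textup{out}}$ already identified in Section \ref{sec:buffer}.

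Next I would construct $C$ case by case, mirroring the three regimes. In region (A) the incoming demand $f(\ul)+f(\vl)$ is below $f(\sigma)$ and below $f(\wr^{\,\compl})\!=\!f(\wr)$-type supply, so one checks that $C\equiv f(\sigma)$ (the initial value \eqref{def:hc_primavolta}) works throughout: the junction never congests, $u,v$ stay at their data, and $z,w$ relax to the subcritical state with $f=f(\ul)+f(\vl)$; here one must verify that the wave sent back into the outgoing road has positive speed (it does, since both states are on the increasing branch) and the wave sent into the buffer from the left has negative speed. In region (B) a queue forms on $P^1$: the supply of the outgoing road, $f(\wr)$, is the binding constraint, the $P^2$ demand $f(\vl)$ passes unthrottled, and $P^1$ gets the residual $f(\wr)-f(\vl)$; I would let $C(t)$ decrease from $f(\sigma)$ to the value that makes $h_c$-supply at $x=\Dx$ match $f(\wr)$, i.e.\ track the buffer filling until $z$ reaches the supercritical level with $f(\bar z)=f(\wr)-f(\vl)$, then freeze $C$. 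Region (C) is symmetric: both incoming roads congest, each delivering $f(\wr)/2$, and $C$ settles so that the buffer supply equals $f(\wr)$; the $\tfrac12$-splitting is forced by symmetry of the two $\min$ terms in \eqref{choice_at_junction_in} evaluated at the common supercritical $z$. In each case the asymptotic $(\bar u,\bar v,\bar z,\bar w)$ is read off by equating the stationary fluxes, and one recognizes it as the continuous counterpart of the discrete stationary point of Theorem \ref{teo:puntistazionari} (with $\bar z=\bar\omega_J$, $\bar w$ playing the role of the post-bottleneck density).

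The main obstacle I anticipate is (i): proving existence of a \emph{global-in-time} solution in the sense of Definition \ref{def:J_riemann_sol}, i.e.\ that the waves do not rebound forever between $x=0$ and $x=\Dx$ and that $C(t)$ stays in $[0,f(\sigma)]$ and converges. This is a non-degeneracy claim: because the buffer $z$ can hold "oversize" flux up to $f(\sigma)$ above $f$, and because the prescribed fluxes are monotone min-combinations of demand/supply, one expects $z(t)$ (hence $C(t)$) to be monotone on each phase and bounded, so only finitely many wave interactions occur before the system settles — but making this rigorous requires a careful bookkeeping of which of the four branches in \eqref{GodunovFlux} is active for $G_h(z(\Dx-,t),\sigma)$ and $G_f(\sigma,z(0+,t))$ as $z$ crosses $\sigma$. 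A clean way is to show a Lyapunov/monotonicity property: along the evolution, the total (suitably weighted) variation around the junction is non-increasing and the buffer content $\int_0^\Dx z\,dx$ is monotone, which both rules out perpetual rebounding and yields the limit. Once global existence and the monotone relaxation of $z$ are in hand, part (ii) is the short computation of equating stationary fluxes described above, and the identification of $C=h-f$ closes the construction.
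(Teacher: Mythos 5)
Your overall strategy (half-Riemann machinery on each segment, case-by-case construction of $C$, reading off the asymptotic states by equating stationary fluxes) is the same as the paper's, but two of your concrete choices break down. The most visible one is case (A): you propose to keep $C\equiv f(\sigma)$ ``throughout''. Condition (R3) forces $h(z(0+,t))=f(z(0+,t))+C=f(\ul)+f(\vl)$ at the asymptotic state; with $C=f(\sigma)$ this would require $f(\bar z)=f(\ul)+f(\vl)-f(\sigma)<0$ (recall $f(\ul)+f(\vl)<f(\wr)<f(\sigma)$ in region (A)), which is impossible, and it is also incompatible with the state $\bar z<\sigma$, $f(\bar z)=f(\ul)+f(\vl)$ that you correctly want to reach. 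In the paper's construction the flux inside the buffer is \emph{not} a single constant shift: the initial value $\hc=f+f(\sigma)$ survives only in the shrinking wedge between the two shocks emanating from $x=0$ and $x=\Dx$, and the states adjacent to the traces carry $C=0$ in case (A), $C=f(\vl)$ in case (B), and $C=\min\{f(\ul),f(\vl)\}$ near $x=0$ versus $C=f(\wr)/2$ near $x=\Dx$ in case (C). The whole point of the theorem is that $C$ is a genuinely space--time dependent function determined \emph{a posteriori} by the wave pattern; a single time-dependent $C(t)$ (your case (B) proposal of a continuously decreasing $C$) would moreover put you outside the piecewise-constant framework in which Definition \ref{def:J_riemann_sol} and the generalized Rankine--Hugoniot condition \eqref{RHgen} are used.

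The second gap is part (i). You correctly identify that the danger is waves rebounding forever between $x=0$ and $x=\Dx$, but you defer this to an unproved ``Lyapunov/monotonicity of the weighted variation'' claim, and your proposed shortcut --- that $z$ becomes spatially constant after one crossing so the buffer reduces to the ODE of Section \ref{sec:buffer} --- presupposes exactly what has to be shown. The paper resolves this by explicitly exhibiting all intermediate flux/density pairs ($\hat h,\hat z$), ($\tilde h,\tilde z$), then ($\tilde h_2,\tilde z_2,\hat w_2$) or ($\hat h_2,\hat z_2,\tilde u_2,\tilde v_2$), verifying the sign of each of the waves $L_2,L_3,L_5,L_6,L_7$ via \eqref{RHgen}, and checking that after the single interaction of $L_2$ with $L_3$ and one reflection of $L_5$ at a boundary of $[0,\Dx]$ no further wave enters the buffer --- so termination is a finite, explicit computation, not a compactness or monotonicity argument. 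Without producing these intermediate states (and in particular the correct branch of \eqref{GodunovFlux} active in \eqref{choice_at_junction_in}--\eqref{choice_at_junction_out} at each stage), the existence claim remains unestablished.
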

\noindent The proof is given in the Appendix \ref{app:A3}, together with the exact expression of $C$ (Remark \ref{rem:htotale}).
\begin{remark}
The main result of the paper comes from the comparison between Theorems \ref{teo:puntistazionari} and \ref{teo:RPteorico}.
Clearly we cannot compare exactly the numerical and the theoretical solution because the first one is composed by a triplet $(\bar\mu^1_{J-1},\bar\mu^2_{J-1},\bar\omega_{J}=\bar\mu^1_{J}+\bar\mu^2_{J})$ while the second one by a quadruplet $(\bar u,\bar v,\bar z,\bar w)$. The reason for this is mainly technical, i.e.\ we chose to have only three unknowns in the numerical setting to make the proof doable.
Anyway, in the case (A) we have the correspondence $(\bar u,\bar v,\bar z=\bar w)=(\bar \mu^1_{J-1},\bar \mu^2_{J-1},\bar \omega_{J})$, and in the case (B),(C) we have the correspondence $(\bar u,\bar v,\bar z)=(\bar \mu^1_{J-1},\bar \mu^2_{J-1},\bar \omega_{J})$.
\end{remark}

\subsection{Numerical tests}\label{numerics2in1}
In this section we present three numerical tests in the case of a merge, in order to confirm experimentally the results described in Sections \ref{sec:Riemann_numerico_2in1} and \ref{sec:modified_equation}. A more complete numerical study of the multi-path model presented here can be found in \cite{bretti2013DCDS-S}.
We assume that each arc has the same length, equal to 1, then each path has length equal to 2. We also assume that the flux has the classical form $f(\rho):=\rho(1-\rho)$, so that $\rhomax=1$ and $\sigma=0.5$. We divide each arc in 25 cells (then the junction is found at $J=26$ along each path) and we impose Dirichlet boundary conditions at the beginning and the end of each path, i.e.\ at points $0^{(1)}$, $0^{(2)}$, $2^{(1)}$, and $2^{(2)}$, see Fig.\ \ref{fig:network_2in_1out}. Note that the points $2^{(1)}$ and $2^{(2)}$ correspond to the same physical point. \textit{Mutatis mutandis}, these boundary conditions have the same role of $\ul$, $\vl$, $\wr^1$, and $\wr^2$ in \eqref{CB_2in1out}. We assume roads are empty at the initial time and we look for the stationary solution obtained for $t\to\infty$.

\medskip

\textit{Test 1.} We consider the case (A) in \eqref{regions} (no formation of queue along the incoming roads). For any $t>0$ we set
$$
\mu^1(0^{(1)},t)=0.1, \quad
\mu^2(0^{(2)},t)=0.15, \quad
\mu^1(2^{(1)},t)=0.3, \quad
\mu^2(2^{(2)},t)=0.3.
$$
In Fig.\ \ref{fig:numtest(A)} we report the stationary solution for $\mu^1$ along $P^1$, $\mu^2$ along $P^2$ and the total density $\omega$ along $P^1$. 
\begin{figure}[h!]
\begin{center}
\includegraphics[width=0.5\textwidth]{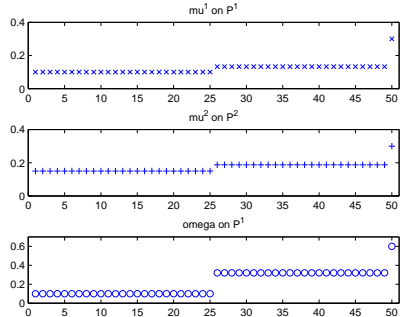}
\end{center}
\caption{Test 1. Stationary solution for $\mu^1$ along $P^1$, $\mu^2$ along $P^2$ and the total density $\omega$ along $P^1$.}
\label{fig:numtest(A)}
\end{figure}
As expected, the right boundary condition has no effect on the solution. After the junction, the total density $\omega$ is equal to $\bar \lambda \approx 0.3197<\sigma$ and it is such that $f(\bar \lambda)=f(0.1)+f(0.15)$. The value $\bar \lambda$ corresponds to $\bar\omega_J$ in Theorem \ref{teo:puntistazionari} and to $\bar z=\bar w$ in Theorem \ref{teo:RPteorico}.
Correctly, the values of the single densities $\mu^1$ and $\mu^2$ after the junction are equal to
$$
\frac{f(0.1)}{f(0.1)+f(0.15)}\bar \lambda \approx 0.1323
\quad \textrm{ and } \quad
\frac{f(0.15)}{f(0.1)+f(0.15)}\bar \lambda \approx 0.1874.
$$ 

\medskip

\textit{Test 2.} We consider the case (B) in \eqref{regions} (formation of a queue along the first incoming road). For any $t>0$ we set
$$
\mu^1(0^{(1)},t)=0.3, \quad
\mu^2(0^{(2)},t)=0.1, \quad
\mu^1(2^{(1)},t)=0.35, \quad
\mu^2(2^{(2)},t)=0.25.
$$
In Fig.\ \ref{fig:numtest(B)} we report the stationary solution for $\mu^1$ along $P^1$, $\mu^2$ along $P^2$ and the total density $\omega$ along $P^1$. 
\begin{figure}[h!]
\begin{center}
\includegraphics[width=0.5\textwidth]{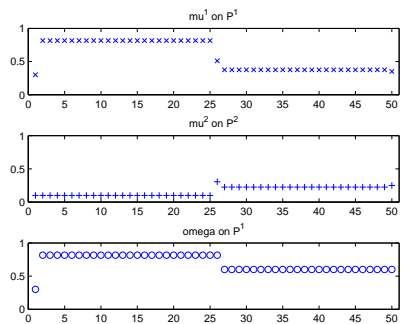}
\end{center}
\caption{Test 2. Stationary solution for $\mu^1$ along $P^1$, $\mu^2$ along $P^2$ and the total density $\omega$ along $P^1$.}
\label{fig:numtest(B)}
\end{figure}
Here, the right boundary conditions affect the solution, but only by means of their sum $0.6=0.35+0.25$, while the left boundary conditions have effect only along $P^2$. 
Correctly, the values of the single densities $\mu^1$ and $\mu^2$ before the junction are equal to $\bar\lambda\approx 0.8162>\sigma$ with $f(\bar\lambda)=f(0.6)-f(0.1)$ and $0.1$, respectively.
The value $\bar \lambda$ corresponds to $\bar\mu^1_{J-1}=\bar\omega_J$ in Theorem \ref{teo:puntistazionari} and to $\bar u=\bar z$ in Theorem \ref{teo:RPteorico}.
Looking at the single densities, at the cell $J$ we find two isolated values equal to 
$$
\frac{f(0.6)-f(0.1)}{f(0.6)}\bar\lambda\approx 0.5101
\quad \textrm{ and } \quad
\bar\lambda-0.5101 \approx 0.3061,
$$ 
corresponding respectively to $\bar\mu^1_{J}$ and $\bar\mu^2_{J}$ in Theorem \ref{teo:puntistazionari}.

Surprisingly, if we look at the total density $\omega$ we see that \textit{the cell $J$ plays the role of the last cell of the incoming roads even if it defined as the first cell of the outgoing road}. In other words, the scheme shifts the junction to one cell to the right. This happens also in the case (C) but does not happen in the case (A). Indeed, in order to perceive the presence of the junction, the multi-path scheme needs an additional cell w.r.t.\ the classical approaches \cite{piccolibook}, where instead the dynamics at junctions is given instantaneously by an external procedure.

\medskip

\textit{Test 3.} We consider the case (C) in \eqref{regions} (formation of two queues along the incoming roads). For any $t>0$ we set
$$
\mu^1(0^{(1)},t)=0.2, \quad
\mu^2(0^{(2)},t)=0.3, \quad
\mu^1(2^{(1)},t)=0.3, \quad
\mu^2(2^{(2)},t)=0.5.
$$
In Fig.\ \ref{fig:numtest(C)} we report the stationary solution for $\mu^1$ along $P^1$, $\mu^2$ along $P^2$ and the total density $\omega$ along $P^1$. 
\begin{figure}[h!]
\begin{center}
\includegraphics[width=0.5\textwidth]{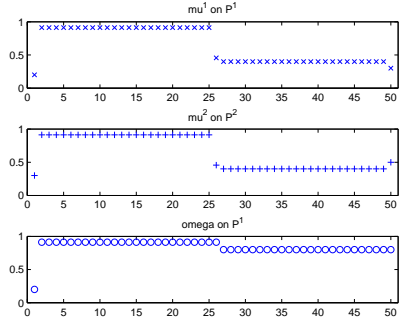}
\end{center}
\caption{Test 3. Stationary solution for $\mu^1$ along $P^1$, $\mu^2$ along $P^2$ and the total density $\omega$ along $P^1$.}
\label{fig:numtest(C)}
\end{figure}
Here, the right boundary conditions affect the solution, but only by means of their sum $0.8=0.3+0.5$, while the left boundary conditions have no effect. 
The densities split equally along the two paths, regardless the fact that the boundary conditions are not equal. We refer the reader to \cite{bretti2013DCDS-S} for a comparison with the classical models and the relationship with priority coefficients. 
Before the junction, the densities are equal to $\bar \lambda \approx 0.9123>\sigma$ with $f(\bar \lambda)=\frac{f(0.8)}{2}$. The value $\bar \lambda$ corresponds to $\bar\mu^1_{J-1}=\bar\mu^2_{J-1}$ in Theorem \ref{teo:puntistazionari} and to $\bar u=\bar v$ in Theorem \ref{teo:RPteorico}. 
At the cell $J$ we find an isolated value equal to $\frac{\bar \lambda}{2}$, corresponding to $\bar\mu^1_{J}=\bar\mu^2_{J}$ in Theorem \ref{teo:puntistazionari}. We refer again to \cite{bretti2013DCDS-S} for a numerical study of the density evolution at this cell.

\section{Conclusions}\label{sec:conclusions}
In this paper we investigated the properties of a Godunov-based numerical scheme for a first-order macroscopic model describing traffic flow on networks. The scheme can be implemented in minutes since it does not requires additional procedures to manage the solution at junctions. Numerical and computational details have been left to the ``twin'' paper \cite{bretti2013DCDS-S}.

Despite the simplicity of the numerical discretization, the method shows many interesting properties. In the following we summarize the main features of our approach.

\begin{itemize}
\item 
Let us recall that the quantities $G(\rhomeno,\sigma)$ and $G(\sigma,\rhopiu)$ appearing in \eqref{proprieta_base_G} match exactly the definitions of the maximum incoming flux and of the maximum outgoing flux, respectively, that can be obtained on each road. Namely, for all incoming roads, 
\begin{equation*}\label{def:gammamaxin}
G(\rho,\sigma)= 
\left\{\begin{array}{ll}
f(\rho), & \mbox{ if } \rho\in[0,\sigma],\\
f(\sigma), & \mbox{ if } \rho\in(\sigma,\rhomax],
\end{array}\right.
\end{equation*}
and, for all outgoing roads, 
\begin{equation*}\label{def:gammamaxout}
G(\sigma,\rho)= 
\left\{\begin{array}{ll}
f(\sigma), & \mbox{ if } \rho\in[0,\sigma],\\
f(\rho), & \mbox{ if } \rho\in(\sigma,\rhomax].
\end{array}\right.
\end{equation*}
They also correspond to the demand and supply functions used in \cite{herty2009NHM,lebacque1996proc}. 
As a consequence, the multi-path scheme selects automatically a solution at junctions that maximizes the flow along each path (user optimum). The scheme \textit{does not compute in general the maximal flow that could possibly be transferred over the node} (global optimum), as it happens in more standard approaches. See \cite{bretti2013DCDS-S} for a numerical evidence of this fact and \cite[Sect.\ 3.1]{tampere2011TRB} for a discussion on this point.

\item In the challenging case of a merge, we proved that the proposed numerical scheme is consistent with the Riemann problem \eqref{pb_riemann_gen}--\eqref{pb_riemann_gen_init_data}. As a consequence, the numerical approximation automatically assigns to the junction a {\it finite spatial dimension} and its density evolution is managed by a special flux function $h$, as defined in \eqref{def:h(x,t)}. Specifically, the function $h$ balances the incoming flows and the outgoing flow according to the fact that each population wants to maximize its own flow. It is able to widen temporarily the spaced-junction capacity, allowing the passage of vehicles and making the junction act as a ``buffer'', see Section \ref{sec:buffer}. The exact expression of the space-time-dependent flux $h$ can be found in the proof of Theorem \ref{teo:RPteorico} (see Remark \ref{rem:htotale} in Appendix \ref{app:A3}).

\item The multi-path model, together with the proposed discretization, fulfills all the 7 requirements for junction models outlined in \cite[Sect.\ 3.1]{tampere2011TRB}. In particular, it is generally applicable irrespectively of the number of incoming and outgoing roads, the traffic never flows backwards and all flows are non-negative,  vehicles are conserved, and turning fractions are preserved.

\item Finally we have shown that the standard CFL condition \eqref{CFLstandard} is not in general sufficient to ensure that the solution of the numerical scheme \eqref{schema} is admissible at junctions. Instead, it is required a stronger condition which depends on the number of incoming roads at the junction. This condition is given, in the case of a junction with $r_{\textup{inc}}$ incoming roads, by \eqref{CFLconN}.
\end{itemize}

\appendix
\section{Technical proofs}\label{app:A}

\subsection{Proof of Theorem \ref{teo:puntistazionari}.}\label{app:A1}
\begin{proof}
To simplify the notations, let us introduce the auxiliary variables
\begin{equation}\label{def:xyz}
x^n:=\mu^{n,1}_{J-1},  \qquad
y^n:=\mu^{n,2}_{J-1}, \qquad
z^n:=\omega_J^n=\mu^{n,1}_{J}+\mu^{n,2}_{J}.
\end{equation}
At the node $J$ it is convenient handling the sum of the two densities rather than the two densities separately. The densities $\mu^{n,1}_{J}$ and $\mu^{n,2}_{J}$ can be univocally found \textit{a posteriori}. In the proof we employ the notation $\compl$ defined by \eqref{tau}.
The discrete dynamical system \eqref{2in1_allincrocio_Jm1}--\eqref{2in1_allincrocio_J} with boundary conditions \eqref{CB_2in1out} is written as
\begin{equation}\tag{DS}\label{DS}
\left\{
\begin{array}{l}
x^{n+1}=x^n-\DtsuDx\big(G(x^n,z^n)-G(\alphauno,x^n)\big)\\ [2mm]
y^{n+1}=y^n-\DtsuDx\big(G(y^n,z^n)-G(\alphadue,y^n)\big)\\ [2mm]
z^{n+1}=z^n-\DtsuDx\big(G(z^n,\beta)-G(x^n,z^n)-G(y^n,z^n)\big).
\end{array}
\right. 
\end{equation}
Therefore, a stationary point $(x,y,z)$ for (\ref{DS}) must satisfy
\begin{eqnarray}
G(x,z)&=&G(\alphauno,x) \label{DS1} \\
G(y,z)&=&G(\alphadue,y) \label{DS2} \\
G(z,\beta)&=&G(x,z)+G(y,z)=G(\alphauno,x)+G(\alphadue,y). \label{DS3} 
\end{eqnarray}

Now we consider the three cases.
\begin{itemize}
\item CASE (A) 
\begin{equation}\label{(A)Riemann_proof_numerica}
\boxed{f(\ul)+f(\vl) < f(\wr)}
\end{equation}
\begin{itemize}
\item CASE A.1
\begin{equation}\label{11i}
\left\{
\begin{array}{l}
G(\alphauno,x)=f(\alphauno) \\ [2mm]
G(\alphadue,y)=f(\alphadue)
\end{array}
\right. 
\end{equation}

The quantity $G(z,\beta)$ can be in principle equal to $f(z)$, $f(\beta)$, or $f(\sigma)$. But \eqref{assumption_uvzw} $\Rightarrow (\wr>\sigma) \Rightarrow
G(z,\beta)\neq f(\sigma)$.
Moreover, if $G(z,\beta)=f(\beta)$, by \eqref{DS3} and \eqref{11i} we would have $f(\ul)+f(\vl)=f(\wr)$, which is in contradiction with \eqref{(A)Riemann_proof_numerica}. Then we must have 
\begin{equation}\label{g_casoA1}
G(z,\beta)=f(z).
\end{equation}
Now, \eqref{assumption_uvzw}+\eqref{g_casoA1} $\Rightarrow (z=\beta$) or ($z<\beta$ and $f(z)\leq f(\beta)$).\\
But $(z=\beta)$ $\stackrel{\eqref{DS3}+\eqref{11i}+\eqref{g_casoA1}}{\Rightarrow}$ $f(\alphauno)+f(\alphadue)=f(z)=f(\beta)$, which is impossible because of \eqref{(A)Riemann_proof_numerica}. 
Then, we must have $z<\beta$ and $f(z)\leq f(\beta)$, which implies
\begin{equation}\label{113i}
z\leq \beta^\compl<\sigma.
\end{equation}
By $f(\alphauno)+f(\alphadue)=f(z)$ and \eqref{assumption_uvzw} we also get
\begin{equation}\label{113ii}
z>\alphauno \textrm{ and } z>\alphadue.
\end{equation}
By \eqref{DS1}+\eqref{11i} we get $G(x,z)=f(\alphauno)$. Since $\ul<\sigma$, this can be true only if $x=\alphauno$ or $x=\alphauno^\compl$ or $z=\alphauno$ or $z=\alphauno^\compl$. But
\eqref{113ii} $\Rightarrow$ $z\neq\alphauno$, and 
\eqref{113i} $\Rightarrow$ $z\neq\alphauno^\compl$.
Moreover, ($x=\alphauno^\compl$) $\stackrel{\eqref{assumption_uvzw}}{\Rightarrow}$ ($x>\sigma$) $\stackrel{\eqref{113i}}{\Rightarrow}$ ($G(x,z)=f(\sigma)$) $\stackrel{\eqref{DS1}}{\Rightarrow}$ ($G(\alphauno,x)=f(\sigma)$), which is impossible because $\alphauno<\sigma$.
Then, $x=\alphauno$ is the only valid candidate.

Following same reasoning, we get $y=\alphadue$. Indeed, a simple verification shows that $x=\alphauno$, $y=\alphadue$ and $z<\sigma$ such that
\begin{equation}\label{113iii}
f(z)=f(\alphauno)+f(\alphadue)
\end{equation} 
is a stationary point.

Once $z$ is found, it is easy to find $z^1:=\mu_J^1$ and $z^2:=\mu_J^2$. From \eqref{schema_2in1out} we have that, at the equilibrium, $z^1$ satisfies
$$
\frac{z_1}{z}G(z,\beta)=G(x,z).
$$
Moreover, 
$G(z,\beta)
\stackrel{\eqref{DS3}+\eqref{11i}}{=}
f(\alphauno)+f(\alphadue)$ 
and 
$G(x,z)
\stackrel{\eqref{DS1}+\eqref{11i}}{=}
f(\alphauno)$. 
Then, 
$$
z^1=\frac{f(\alphauno)}{f(\alphauno)+f(\alphadue)}\ z
\quad\textrm{ and }\quad z^2=z-z^1.
$$

\item CASE A.2

\begin{equation}\label{12i}
\left\{
\begin{array}{l}
G(\alphauno,x)=f(\alphauno) \\ [2mm]
G(\alphadue,y)=f(y)
\end{array}
\right. 
\end{equation}
We can also assume that 
\begin{equation}\label{12ii}
y\neq \alphadue \qquad \textrm{ and } \qquad y\neq\alphadue^\compl
\end{equation}
because if this does not hold true, we come back to CASE A.1. 
We have
\begin{equation*}
\eqref{assumption_uvzw}+\eqref{12i}+\eqref{12ii}
\Rightarrow \alphadue<y \textrm{ and }  f(y)<f(\alphadue)
\end{equation*}
and then
\begin{equation}\label{12iv}
y > \alphadue^\compl > \sigma.
\end{equation}
Finally we have
\begin{equation}\label{14vi}
\eqref{DS2}+\eqref{12i} \Rightarrow G(y,z)=f(y)
\end{equation}
which is impossible because of \eqref{12iv}. 
We conclude that CASE A.2 is not possible.

\item CASE A.3

\begin{equation}\label{13i}
\left\{
\begin{array}{l}
G(\alphauno,x)=f(x) \\ [2mm]
G(\alphadue,y)=f(\alphadue)
\end{array}
\right. 
\end{equation}
This case is not possible analogously to CASE A.2.

\item CASE A.4

\begin{equation}\label{14i}
\left\{
\begin{array}{l}
G(\alphauno,x)=f(x) \\ [2mm]
G(\alphadue,y)=f(y)
\end{array}
\right. 
\end{equation}
We can also assume that 
\begin{equation}\label{14ii}
x\neq \alphauno,\alphauno^\compl \qquad \textrm{ and } \qquad y\neq\alphadue,\alphadue^\compl
\end{equation}
because if this does not hold true, we come back to the previous cases.
By \eqref{assumption_uvzw}+\eqref{14i}+\eqref{14ii} we get
\begin{equation*}
(\alphauno<x \textrm{ and } f(x)<f(\alphauno)) \quad \text{ and } \quad (\alphadue<y \textrm{ and } f(y)<f(\alphadue))
\end{equation*}
and then
\begin{equation}\label{14FeM}
x>\alphauno^\compl>\sigma \qquad \text{and} \qquad y>\alphadue^\compl>\sigma.
\end{equation}
In particular we have
\begin{equation*}
\eqref{DS2}+\eqref{14i} \Rightarrow G(y,z)=f(y)
\end{equation*}
which is impossible because of \eqref{14FeM}. 
We conclude that CASE A.4 is not possible.
\end{itemize}

\vskip0.5cm

\item CASE (B) Omitted for brevity, same ideas apply. 


\vskip0.5cm

\item CASE (C) 

\begin{equation}\label{2i}
\boxed{
f(\alphauno) > \frac{f(\beta)}{2},
\quad 
f(\alphadue) > \frac{f(\beta)}{2}
}
\end{equation}

\begin{itemize}
\item CASE C.1
\begin{equation}\label{21i}
\left\{
\begin{array}{l}
G(\alphauno,x)=f(\alphauno) \\ [2mm]
G(\alphadue,y)=f(\alphadue)
\end{array}
\right. 
\end{equation}
The quantity $G(z,\beta)$ can be in principle equal to $f(z)$, $f(\beta)$, or $f(\sigma)$. But \eqref{assumption_uvzw} $\Rightarrow G(z,\beta)\neq f(\sigma)$. Moreover, if $G(z,\beta)=f(\beta)$, by \eqref{DS3} and \eqref{21i} we would have $f(\alphauno)+f(\alphadue)=f(\beta)$, which is in contradiction with \eqref{2i}. Then, we must have
\begin{equation}\label{21ii}
G(z,\beta)=f(z),
\end{equation}
which, with \eqref{assumption_uvzw}, implies
\begin{equation}\label{21iii}
z\leq\beta \textrm{ and } f(z)\leq f(\beta).
\end{equation}
Then, 
$$
f(z)
\stackrel{\eqref{DS3}+\eqref{21i}+\eqref{21ii}}{=}
f(\alphauno)+f(\alphadue)
\stackrel{\eqref{2i}}{>}
f(\beta)
$$
which contradicts \eqref{21iii}. We conclude that CASE C.1 is not possible.

\item CASE C.2. Omitted for brevity, same ideas apply. 

\item CASE C.3. Omitted for brevity, same ideas apply. 

\item CASE C.4

\begin{equation}\label{24i}
\left\{
\begin{array}{l}
G(\alphauno,x)=f(x) \\ [2mm]
G(\alphadue,y)=f(y)
\end{array}
\right. 
\end{equation}
As in CASE A.4, we can also assume that 
\begin{equation}\label{24ii}
x\neq \alphauno,\alphauno^\compl \qquad \textrm{ and } \qquad y\neq\alphadue,\alphadue^\compl
\end{equation}
and we get analogously
\begin{equation}\label{24iii}
x>\alphauno^\compl>\sigma \quad \textrm{ and } \quad y>\alphadue^\compl>\sigma.
\end{equation}

The quantity $G(z,\beta)$ can be in principle equal to $f(z)$, $f(\beta)$, or $f(\sigma)$. But \eqref{assumption_uvzw} $\Rightarrow G(z,\beta)\neq f(\sigma)$.
By contradiction, we can also prove that 
$G(z,\beta)\neq f(z)$. 
Indeed, if $G(z,\beta)=f(z)$ we would have
\begin{equation}\label{24iii_bis}
z<\beta^\compl<\sigma
\end{equation}
and
\begin{equation}\label{24iii_ter}
f(\sigma)
\stackrel{\eqref{24iii}+\eqref{24iii_bis}}{=}
G(x,z)
\stackrel{\eqref{DS1}}{=}
G(\alphauno,x)
\end{equation}
which contradicts \eqref{24i}.
As a consequence, we must have $G(z,\beta)=f(\beta)$, and then, by \eqref{DS3} and \eqref{24i},
\begin{equation}\label{24iv}
f(x)+f(y)=f(\beta).
\end{equation}
Moreover, we have that 
\begin{equation}\label{24vii}
z>\sigma,
\end{equation}
since, if $z\leq\sigma$, we get a contradiction as before in \eqref{24iii_ter}.
Finally, we have
\begin{equation}\label{24viii}
\begin{split}
f(x)
\stackrel{\eqref{24i}}{=} 
G(\alphauno,x)
\stackrel{\eqref{DS1}}{=} 
G(x,z)
\stackrel{\eqref{24iii}+\eqref{24vii}}{=} 
f(z)
\stackrel{\eqref{24iii}+\eqref{24vii}}{=}
G(y,z)
\stackrel{\eqref{DS2}}{=} \\
G(\alphadue,y)
\stackrel{\eqref{24i}}{=} 
f(y),
\end{split}
\end{equation}
and, by \eqref{24iv} and \eqref{24viii},
$$
f(x)=f(y)=f(z)=\frac{f(\beta)}{2}.
$$
This is indeed a stationary point for the system.

\end{itemize}
\end{itemize}
\end{proof}


\subsection{Proof of Theorem \ref{teo:stability}.}\label{app:A2} 
\begin{proof}
Consider again the discrete dynamical system \eqref{DS} and the auxiliary variables \eqref{def:xyz}.
\begin{itemize}
\item CASE (A)\\
In a neighbourhood of the stationary point \eqref{puntostazionarioA} the system has the form
\begin{equation}
\left\{
\begin{array}{l}
x^{n+1}=F_1(x^n,y^n,z^n):=x^n-\DtsuDx\big(f(x^n)-f(\alphauno)\big)\\ [2mm]
y^{n+1}=F_2(x^n,y^n,z^n):=y^n-\DtsuDx\big(f(y^n)-f(\alphadue)\big)\\ [2mm]
z^{n+1}=F_3(x^n,y^n,z^n):=z^n-\DtsuDx\big(f(z^n)-f(x^n)-f(y^n)\big).
\end{array}
\right. 
\end{equation}
The Jacobian matrix of $F=(F_1,F_2,F_3)$ computed at the stationary point $(x,y,z)$ is 
$$
J_F=
\left(
\begin{array}{ccc}
1-\DtsuDx f'(x) & 0               & 0 \\
0               & 1-\DtsuDx f'(y) & 0 \\
\DtsuDx f'(x)   & \DtsuDx f'(y)   & 1-\DtsuDx f'(z) 
\end{array}
\right).
$$
The three eigenvalues are $1-\DtsuDx f'(x)$, $1-\DtsuDx f'(y)$, and $1-\DtsuDx f'(z)$. Their absolute value is strictly less than 1 since $x,y,z<\sigma$ and \eqref{CFLstandard}. 
\item CASE (B)\\
In a neighbourhood of the stationary point \eqref{puntostazionarioB} the system has the form
\begin{equation}
\left\{
\begin{array}{l}
x^{n+1}=F_1(x^n,y^n,z^n):=x^n-\DtsuDx\big(f(z^n)-f(x^n)\big)\\ [2mm]
y^{n+1}=F_2(x^n,y^n,z^n):=y^n-\DtsuDx\big(f(y^n)-f(\alphadue)\big)\\ [2mm]
z^{n+1}=F_3(x^n,y^n,z^n):=z^n-\DtsuDx\big(f(\beta)-f(z^n)-f(y^n)\big).
\end{array}
\right. 
\end{equation}
The Jacobian matrix of $F=(F_1,F_2,F_3)$ computed at the stationary point $(x,y,z)$ is 
$$
J_F=
\left(
\begin{array}{ccc}
1+\DtsuDx f'(x) & 0               & -\DtsuDx f'(z) \\
0               & 1-\DtsuDx f'(y) & 0 \\
0               & \DtsuDx f'(y)   & 1+\DtsuDx f'(z) 
\end{array}
\right).
$$
The three eigenvalues are $1+\DtsuDx f'(x)$, $1-\DtsuDx f'(y)$, and $1+\DtsuDx f'(z)$. Their absolute value is strictly less than 1 since $y<\sigma$, $x,z>\sigma$ and \eqref{CFLstandard}. 
\item CASE (C)\\
In a neighbourhood of the stationary point \eqref{puntostazionarioD} the system has the form
\begin{equation}
\left\{
\begin{array}{l}
x^{n+1}=F_1(x^n,y^n,z^n):=x^n-\DtsuDx\big(f(z^n)-f(x^n)\big)\\ [2mm]
y^{n+1}=F_2(x^n,y^n,z^n):=y^n-\DtsuDx\big(f(z^n)-f(y^n)\big)\\ [2mm]
z^{n+1}=F_3(x^n,y^n,z^n):=z^n-\DtsuDx\big(f(\beta)-f(z^n)-f(z^n)\big).
\end{array}
\right. 
\end{equation}
The Jacobian matrix of $F=(F_1,F_2,F_3)$ computed at the stationary point $(x,y,z)$ is 
$$
J_F=
\left(
\begin{array}{ccc}
1+\DtsuDx f'(x) & 0               & -\DtsuDx f'(z) \\
0               & 1+\DtsuDx f'(y) & -\DtsuDx f'(z) \\
0               & 0               & 1+2\DtsuDx f'(z) 
\end{array}
\right).
$$
The three eigenvalues are $1+\DtsuDx f'(x)$, $1+\DtsuDx f'(y)$, and $1+2\DtsuDx f'(z)$. Their absolute value is strictly less than 1 since $x,y,z>\sigma$ and \eqref{CFLstandard}. 
\end{itemize}
\end{proof}


\subsection{Proof of Theorem \ref{teo:RPteorico}.}\label{app:A3}

\begin{proof} 
The proof is constructive. 
We refer to Appendix \ref{sec:Riemann_and_half-Riemann_intro} for some basic notions about (half) Riemann problem. 
We have to solve a left-half Riemann problem for the two equations in $u$ and $v$, a left- and a right-half Riemann problem (with different fluxes) for the equation in $z$, and a right-half Riemann problem for the equation in $w$, 
under the constraints \eqref{boundary_0} and \eqref{boundary_Dx}, initial conditions satisfying \eqref{assumption_uvzw} and \eqref{zc=sigma}, and with the choice of fluxes \eqref{choice_at_junction_in} and \eqref{choice_at_junction_out}.
In the proof we employ the notation $\compl$ defined by \eqref{tau}.

We recall here the definition of the flux $\hc$ given in \eqref{def:hc_primavolta},
\begin{equation}\label{def:hc}
\hc(\cdot):=f(\cdot)+f(\sigma)
\end{equation}
and we look for the fluxes $\hat h(\cdot)$, $\tilde h(\cdot)$ and the constants $\tilde u, \tilde v, \hat z, \tilde z, \hat w$ (see Fig.\ \ref{fig:RiemannPb}) such that $u(x$=$0-,\ t$=$0+)=\tilde u$, $v(0-,0+)=\tilde v$, $z(0+,0+)=\hat z$, $z(\Dx-,0+)=\tilde z$, $w(\Dx+,0+)=\hat w$ with
\begin{equation}
(\tilde u=\ul \text{ or } \tilde u\in N(\ul)) \quad\mbox{and}\quad f(\tilde u) = \min\{ G_f(\tilde u,\sigma), G_f(\sigma,\hat z)\} \label{A_def_utilde}
\end{equation}
\begin{equation}
(\tilde v=\vl \text{ or } \tilde v\in N(\vl)) \quad\mbox{and}\quad f(\tilde v) = \min\{ G_f(\tilde v,\sigma), G_f(\sigma,\hat z)\} \label{A_def_vtilde}
\end{equation}
\begin{equation}
\big((\hat h,\hat z)=(\hc,\zc) \text{ or } (\hat h,\hat z)\in \mathcal P(\hc,\zc)\big) \quad\mbox{and}\quad \hat h(\hat z) = f(\tilde u) + f(\tilde v) \label{A_def_zhat}
\end{equation}
\begin{equation}
\begin{split}
\big((\tilde h,\tilde z)=(\hc,\zc) \text{ or } (\tilde h,\tilde z)\in \mathcal N(\hc,\zc)\big) \quad\mbox{and} \quad \tilde h(\tilde z) =\\ \min\{G_{\tilde h}(\tilde z,\sigma),  G_f(\sigma,\hat w)\}
\end{split} 
\label{A_def_ztilde}
\end{equation}
\begin{equation}
(\hat w=\wr \text{ or } \hat w\in P(w_r)) \quad\mbox{and}\quad f(\hat w)=\tilde h(\tilde z)
\label{A_def_what}
\end{equation}
where the sets $N$, $P$, $\mathcal N$, $\mathcal P$ are defined in Definitions \ref{def:N}, \ref{def:P}, \ref{def:Ngen}, \ref{def:Pgen}, respectively.
\begin{figure}[h!]
\begin{center}
\begin{psfrags}
\psfrag{x}{$x$} \psfrag{t}{$t$}
\psfrag{Us}{$\ul$} \psfrag{Vs}{$\vl$}
\psfrag{Ut}{$\tilde u$} \psfrag{Vt}{$\tilde v$} 
\psfrag{Rc}{$(\hat h,\hat z)$} \psfrag{Rd}{\hskip-11pt$(\hc,\zc)$} 
\psfrag{FLUSSI}{\hskip-7pt$f(\tilde u)+f(\tilde v) = \hat h(\hat z)$} 
\psfrag{FLUSSI2}{\hskip-11pt$\tilde h(\tilde z)=f(\hat w)$} 
\psfrag{Re}{$(\tilde h,\tilde z)$} 
\psfrag{Wp}{$w_r$} \psfrag{Wm}{$\hat w$} 
\psfrag{Dx}{\hskip-4pt$\Dx$}\psfrag{0}{$0$}
\psfrag{Lt}{\hskip-8pt$L_1^u$,$L_1^v$}
\psfrag{Lh}{$L_4$}
\psfrag{L1}{$\!\!\!L_2$}\psfrag{L2}{\hskip-5pt$L_3$}
\includegraphics[width=0.8\textwidth]{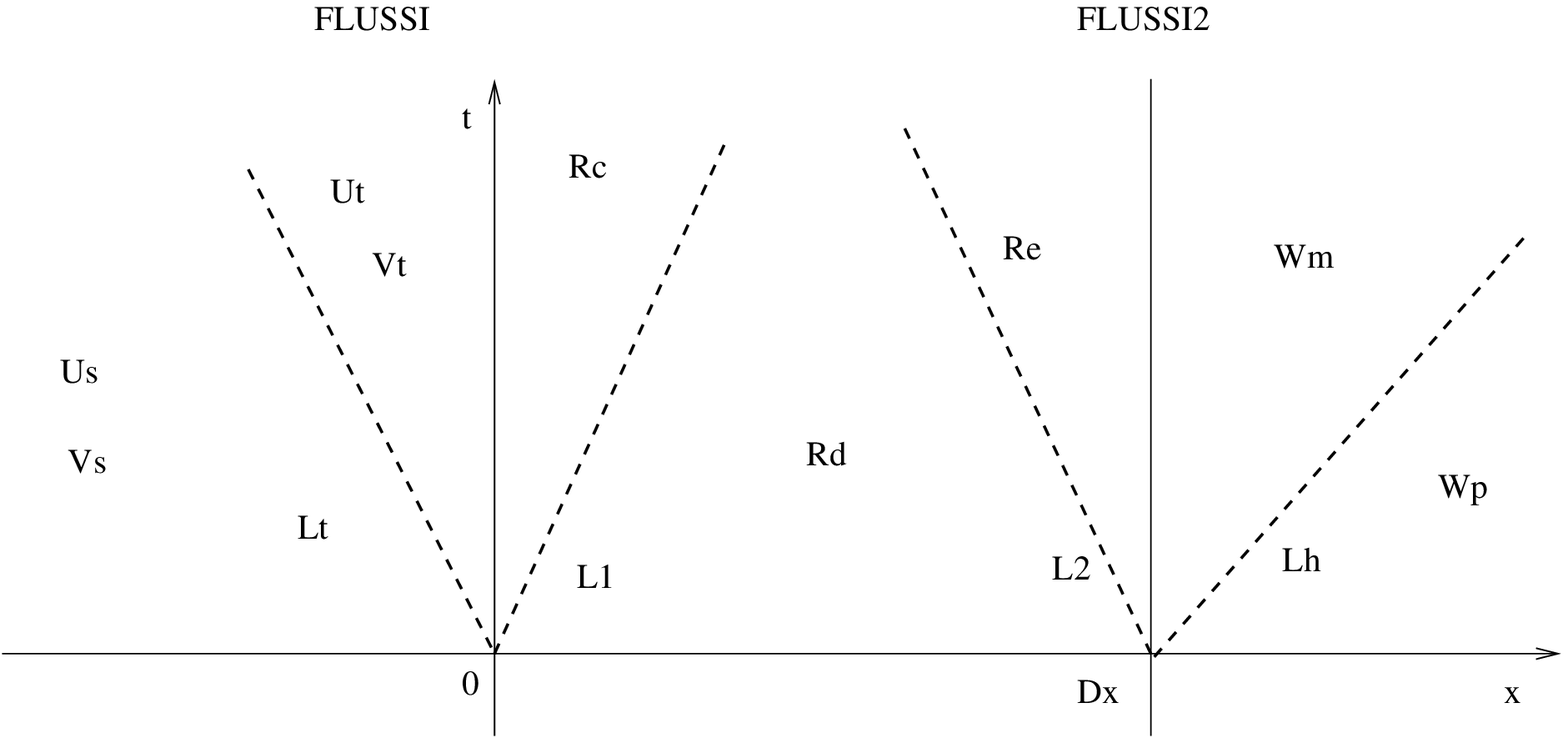}
\end{psfrags}
\end{center}
\caption{Solution of the Riemann problem in the plane $(x,t)$ just after the initial time.
}
\label{fig:RiemannPb}
\end{figure}
%


\begin{itemize}
\item CASE (A) 
\begin{equation}\label{(A)Riemann_proof}
\boxed{f(\ul)+f(\vl) < f(\wr)}
\end{equation}
We claim that the solution is:
\begin{equation}\label{A_prima_soluz}
\boxed{
\begin{array}{c}
\tilde u=\ul, \quad 
\tilde v=\vl, \quad 
\hat z<\sigma \textrm{ s.t. } f(\hat z)=f(\ul)+f(\vl), 
\\ [1mm]
\tilde z=\wr, \quad 
\hat w=\wr, \quad 
\hat h=f, \quad 
\tilde h=f.
\end{array}
}
\end{equation}
It is important to note that such a $\hat z$ actually exists since
$$
f(\ul)+f(\vl)
\stackrel{\eqref{(A)Riemann_proof}}{<}
f(\wr)\leq f(\sigma).
$$
Let us now verify that the fluxes/constants in \eqref{A_prima_soluz} are actually solution of \eqref{A_def_utilde}--\eqref{A_def_what}.
\begin{itemize}
\item[\boxed{\ref{A_def_utilde}}] We have $\tilde u=\ul$ and 
\begin{equation*}
\begin{split}
f(\tilde u)=f(\ul)=
\min\{f(\ul),f(\sigma)\}=
\min\{G_f(\ul,\sigma),G_f(\sigma,\hat z)\}= \\
\min\{G_f(\tilde u,\sigma),G_f(\sigma,\hat z)\}.
\end{split}
\end{equation*}
\item[\boxed{\ref{A_def_vtilde}}] $\tilde v$ is analogous.
\item[\boxed{\ref{A_def_zhat}}] We prove that $(\hat h,\hat z)\in \mathcal P(\hc,\zc)$. To this end, we show that the wave $L_2:=\big((\hat h,\hat z),(\hc,\zc)\big)$ is a strictly positive shock, see Fig.\ \ref{fig:RiemannPb_A}. We have
$$
(\hat z<\sigma=\zc) \Rightarrow (f'(\hat z)>f'(\zc)) 
\stackrel{\eqref{def:hc}+\eqref{A_prima_soluz}}{\Rightarrow}
(\hat h'(\hat z)>\hc'(\zc))
$$ 
and then, referring to Appendix \ref{sec:RP_gen_intro}, $L_2$ is a shock. Moreover, 
$$
\hc(\zc)
\stackrel{\eqref{def:hc}}{=}
f(\zc)+f(\sigma)
\stackrel{\eqref{zc=sigma}}{=}
2f(\sigma)>f(\ul)+f(\vl)
\stackrel{\eqref{A_prima_soluz}}{=}
f(\hat z)
\stackrel{\eqref{A_prima_soluz}}{=}
\hat h(\hat z)
$$
and then, by \eqref{RHgen} and $\zc>\hat z$, $L_2$ is strictly positive.
Finally, the equality $\hat h(\hat z) = f(\tilde u) + f(\tilde v)$ is trivially verified.
\item[\boxed{\ref{A_def_ztilde}}] 
We prove that $(\tilde h,\tilde z)\in \mathcal N(\hc,\zc)$. To this end, we show that $L_3:=\big((\hc,\zc),(\tilde h,\tilde z)\big)$ is a strictly negative shock, see Fig.\ \ref{fig:RiemannPb_A}. We have
$$
(\tilde z
\stackrel{\eqref{A_prima_soluz}}{=}
\wr
>\sigma=\zc) 
\Rightarrow (f'(\zc)>f'(\tilde z)) 
\stackrel{\eqref{def:hc}+\eqref{A_prima_soluz}}{\Rightarrow}
(\hc'(\zc)>\tilde h'(\tilde z))
$$ 
and then, referring to Appendix\ \ref{sec:RP_gen_intro}, $L_3$ is a shock. Moreover, 
$$
\tilde h(\tilde z)
\stackrel{\eqref{A_prima_soluz}}{=}
f(\tilde z)<2f(\sigma)
\stackrel{\eqref{zc=sigma}}{=}
f(\zc)+f(\sigma)
\stackrel{\eqref{def:hc}}{=}
\hc(\zc)
$$
and then, by \eqref{RHgen} and $\tilde z>\zc$, $L_3$ is strictly negative.
Finally, we have
\begin{equation*}
\begin{split}
\tilde h(\tilde z)
\stackrel{\eqref{A_prima_soluz}}{=}
f(\tilde z)
\stackrel{\eqref{A_prima_soluz}}{=}
f(\wr)=
\min\{f(\sigma),f(\wr)\}=\\
\min\{G_f(\tilde z,\sigma),G_f(\sigma,\wr)\}
\stackrel{\eqref{A_prima_soluz}}{=}
\min\{G_{\tilde h}(\tilde z,\sigma),G_f(\sigma,\hat w)\}.
\end{split}
\end{equation*}
\item[\boxed{\ref{A_def_what}}] We have $\hat w=\wr$ and 
$f(\hat w)
\stackrel{\eqref{A_prima_soluz}}{=}
f(\wr)
\stackrel{\eqref{A_prima_soluz}}{=}
\tilde h(\tilde z)$.
\end{itemize}

\medskip

Summarizing, we have that the waves $L_1^u=(\ul,\tilde u)$, $L_1^v=(\vl,\tilde v)$, and $L_4=(\hat w,\wr)$ in Fig.\ \ref{fig:RiemannPb} are not even created, while
\begin{figure}[h!]
\begin{center}
\begin{psfrags}
\psfrag{x}{$x$} \psfrag{t}{$t$}
\psfrag{Us}{$\ul=\tilde u$} \psfrag{Vs}{$\vl=\tilde v$}
\psfrag{What2}{$\hat w_2$} \psfrag{Vt}{$\tilde v$} 
\psfrag{Rc}{$(\hat h,\hat z)$} \psfrag{Rd}{$\!\!\!\!\!\!(\hc,\zc)$} 
\psfrag{Re}{$\!\!\!\!(\tilde h,\tilde z)$}
\psfrag{Wp}{$w_r=\hat w$} 
\psfrag{Dx}{\hskip-4pt$\Dx$}\psfrag{0}{$0$}
\psfrag{L3}{\hskip-4pt$L_6$}
\psfrag{Lt}{\hskip-4pt$L_1^u$}\psfrag{Lv}{\hskip-4pt$L_1^v$}
\psfrag{Lhat}{$L_4$}
\psfrag{Lhat2}{\hskip-4pt$L_7$}
\psfrag{L1}{$L_2$}\psfrag{L2}{$L_3$}
\psfrag{Lb}{$L_5$}
\includegraphics[width=0.8\textwidth]{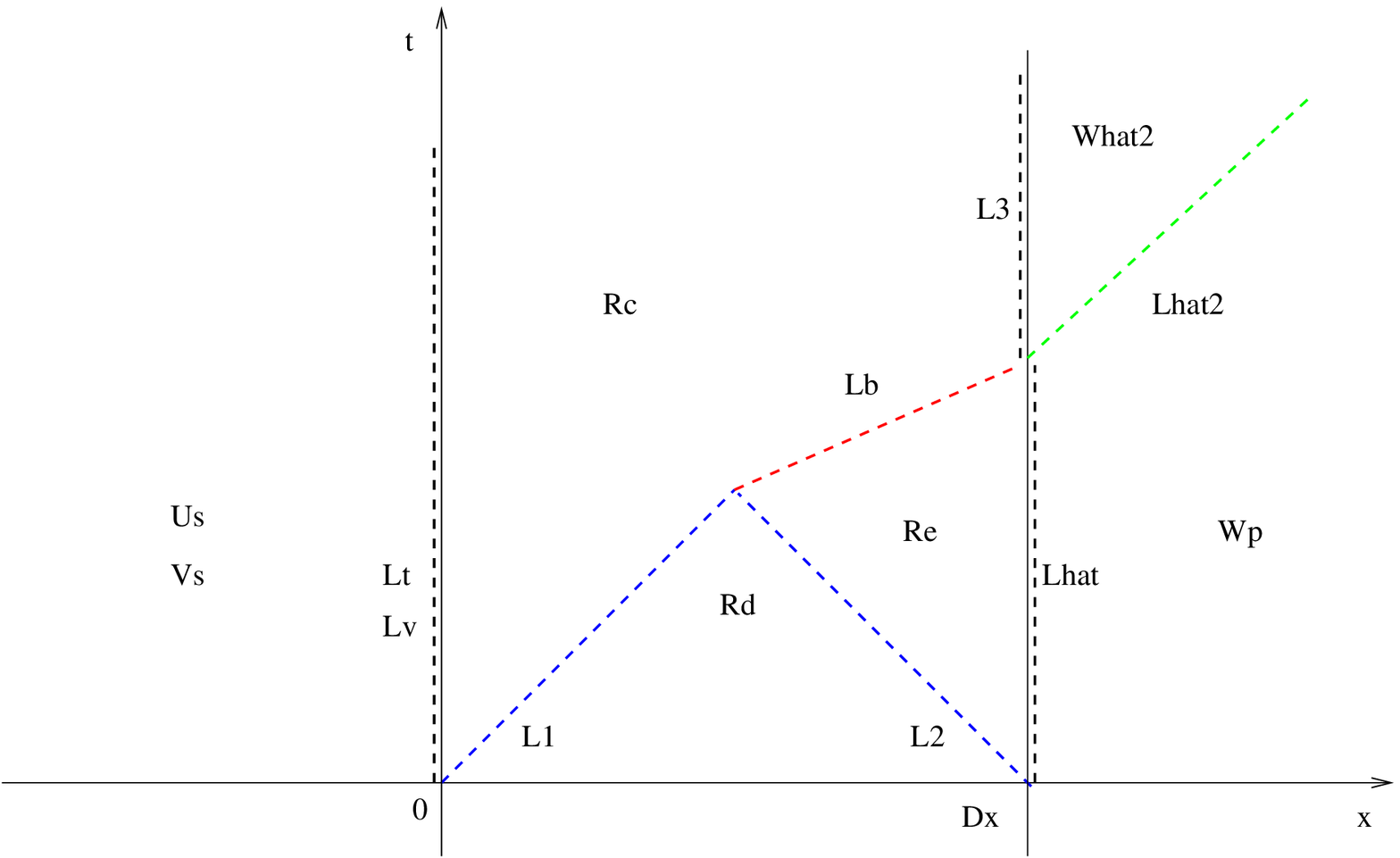}
\end{psfrags}
\end{center}
\caption{CASE (A): Solution of the Riemann problem. }
\label{fig:RiemannPb_A}
\end{figure}
the wave $L_2$ is a strictly positive shock and the wave $L_3$ is a strictly negative shock. When $L_2$ meets $L_3$, a new wave $L_5:=\big((\hat h,\hat z),(\tilde h,\tilde z)\big)$ arises, see Fig.\ \ref{fig:RiemannPb_A}. Let us prove that $L_5$ is a strictly positive shock. We have 
$$
(\hat z<\sigma<\tilde z) \Rightarrow (f'(\hat z)>f'(\tilde z)) 
\stackrel{\eqref{A_prima_soluz}}{\Rightarrow}
(\hat h'(\hat z)>\tilde h'(\tilde z))
$$ 
and then, referring to  Appendix\ \ref{sec:RP_gen_intro}, $L_5$ is a shock. Moreover,
$$
\tilde h(\tilde z)
\stackrel{\eqref{A_prima_soluz}}{=}
f(\tilde z)
\stackrel{\eqref{A_prima_soluz}}{=}
f(\wr)
\stackrel{\eqref{(A)Riemann_proof}}{>}
f(\ul)+f(\vl)
\stackrel{\eqref{A_prima_soluz}}{=}
f(\hat z)
\stackrel{\eqref{A_prima_soluz}}{=}
\hat h(\hat z)
$$
and then, by \eqref{RHgen} and $\tilde z>\hat z$, $L_5$ is strictly positive.
\medskip

When $L_5$ reaches the line $\{x=\Dx\}$ two new waves start. In the same spirit of \eqref{A_def_ztilde}--\eqref{A_def_what} we define the new flux $\tilde h_2$ and the new constants $\tilde z_2$ and $\hat w_2$ such that 
\begin{equation}
\begin{split}
\big((\tilde h_2,\tilde z_2)=(\hat h,\hat z) \text{ or } (\tilde h_2,\tilde z_2)\in \mathcal N(\hat h,\hat z)\big) \quad\mbox{and}\quad \tilde h_2(\tilde z_2) =\\ \min\{G_{\tilde h_2}(\tilde z_2,\sigma), G_f(\sigma,\hat w_2)\}
\end{split} 
\label{A_def_ztilde2}
\end{equation}
\begin{equation}
(\hat w_2=\wr \text{ or } \hat w_2\in P(w_r)) \quad\mbox{and}\quad f(\hat w_2)=\tilde h_2(\tilde z_2).
\label{A_def_what2}
\end{equation}
We claim that the solution is
\begin{equation}\label{A_seconda_soluz}
\boxed{
\tilde z_2=\hat z, \quad 
\hat w_2=\hat z, \quad 
\tilde h_2=\hat h=f.
}
\end{equation}
Let us verify that flux/constants in \eqref{A_seconda_soluz} are actually solution of \eqref{A_def_ztilde2}--\eqref{A_def_what2}.
\begin{itemize}
\item[\boxed{\ref{A_def_ztilde2}}]  We have 
$(\tilde h_2,\tilde z_2)=(\hat h,\hat z)$
and 
\begin{equation*}
\begin{split}
\tilde h_2(\tilde z_2)=
f(\hat z)=
\min\{f(\hat z),f(\sigma)\}=
\min\{G_f(\hat z,\sigma),G_f(\sigma,\hat z)\}=\\
\min\{G_{\tilde h_2}(\tilde z_2,\sigma),G_f(\sigma,\hat w_2)\}.
\end{split}
\end{equation*}
\item[\boxed{\ref{A_def_what2}}] We prove that $\hat w_2=\hat z \in P(\wr)$. By Proposition \ref{prop:P}, this is true if $\hat z<\sigma$ and $f(\hat z)\leq f(\wr)$, which, in turn, comes from \eqref{(A)Riemann_proof} and \eqref{A_prima_soluz}.
Finally, by \eqref{A_seconda_soluz} we have
$$
f(\hat w_2)=
f(\hat z)=
\tilde h_2(\tilde z_2).
$$
\end{itemize} 

\medskip

Summarizing, we have that the wave $L_6:=\big((\hat h,\hat z),(\tilde h_2,\tilde z_2)\big)$ is not created, see Fig.\ \ref{fig:RiemannPb_A}. Moreover, since $\hat w_2=\hat z<\sigma<\wr$ the wave $\hat L_7:=(\hat w_2,\wr)$ is a shock. It is strictly positive again by \eqref{(A)Riemann_proof} and \eqref{A_prima_soluz}.

The proof is concluded by choosing 
$$
\bar u=\tilde u, \quad \bar v=\tilde v, \quad \bar z=\hat z, \quad \bar w=\hat w_2.
$$

\item CASE (B) 
\begin{equation}\label{(B)Riemann_proof}
\boxed{f(\ul) + f(\vl) > f(\wr),\quad f(\vl) < \frac{f(\wr)}{2}}
\end{equation}
Let us start again from the initial time $t=0$ and find the fluxes $\hat h(\cdot)$, $\tilde h(\cdot)$ and the constants $\tilde u, \tilde v, \hat z, \tilde z, \hat w$ such that \eqref{A_def_utilde}--\eqref{A_def_what} hold true.
We claim that the solution is
\begin{equation}\label{B_prima_soluz}
\boxed{
\begin{array}{c}
\tilde u=\ul, \quad 
\tilde v=\vl, \quad 
\hat z=\ul, \quad
\tilde z>\sigma \textrm{ s.t. } f(\tilde z)=f(\wr)-f(\vl), \quad \\ [1mm]
\hat w=\wr, \quad
\hat h(\cdot)=\tilde h(\cdot)=f(\cdot)+f(\vl).
\end{array}
}
\end{equation}
Note that $f(\wr)-f(\vl)>0$ by \eqref{(B)Riemann_proof} and then such a $\tilde z$ exists.

Let us verify that the fluxes/constants in \eqref{B_prima_soluz} are actually solution of \eqref{A_def_utilde}--\eqref{A_def_what}.
\begin{itemize}
\item[\boxed{\ref{A_def_utilde}}] As in CASE (A).
\item[\boxed{\ref{A_def_vtilde}}] As in CASE (A).
\item[\boxed{\ref{A_def_zhat}}] We prove that $(\hat h,\hat z)\in \mathcal P(\hc,\zc)$. To this end, we show that the wave $L_2$ is a strictly positive shock, see Fig.\ \ref{fig:RiemannPb_B}. We have
$$
(\hat z<\sigma=\zc) \Rightarrow (f'(\hat z)>f'(\zc)) 
\stackrel{\eqref{def:hc}+\eqref{B_prima_soluz}}{\Rightarrow}
(\hat h'(\hat z)>\hc'(\zc))
$$ 
and then, referring to Appendix\ \ref{sec:RP_gen_intro}, $L_2$ is a shock. Moreover, 
$$
\hc(\zc)
\stackrel{\eqref{def:hc}}{=}
f(\zc)+f(\sigma)
\stackrel{\eqref{zc=sigma}}{=}
2f(\sigma)
>
f(\ul)+f(\vl)
\stackrel{\eqref{B_prima_soluz}}{=}
f(\hat z)+f(\vl)
\stackrel{\eqref{B_prima_soluz}}{=}
\hat h(\hat z)
$$
and then, by \eqref{RHgen} and $\zc>\hat z$, $L_2$ is strictly positive.
Finally, the equality $\hat h(\hat z) = f(\tilde u) + f(\tilde v)$ is trivially verified.
\item[\boxed{\ref{A_def_ztilde}}] 
We prove that $(\tilde h,\tilde z)\in \mathcal N(\hc,\zc)$. To this end, we show that $L_3$ is a strictly negative shock, see Fig.\ \ref{fig:RiemannPb_B}. We have
$$
(\tilde z>\sigma=\zc) \Rightarrow (f'(\zc)>f'(\tilde z)) 
\stackrel{\eqref{def:hc}+\eqref{B_prima_soluz}}{\Rightarrow}
(\hc'(\zc)>\tilde h'(\tilde z))
$$ 
and then, referring to Appendix\ \ref{sec:RP_gen_intro}, $L_3$ is a shock. Moreover, 
$$
\tilde h(\tilde z)
\stackrel{\eqref{B_prima_soluz}}{=}
f(\tilde z)+f(\vl)
<
2f(\sigma)
\stackrel{\eqref{zc=sigma}}{=}
f(\zc)+f(\sigma)
\stackrel{\eqref{def:hc}}{=}
\hc(\zc)
$$
and then, by \eqref{RHgen} and $\tilde z>\zc$, $L_3$ is strictly negative.
Finally, we have
\begin{equation*}
\tilde h(\tilde z)
\stackrel{\eqref{B_prima_soluz}}{=}
f(\wr)=
\min\{f(\sigma)+f(\vl),f(\wr)\}
\stackrel{\eqref{B_prima_soluz}}{=}
\min\{G_{\tilde h}(\tilde z,\sigma),G_f(\sigma,\hat w)\}.
\end{equation*}
\item[\boxed{\ref{A_def_what}}] We have $\hat w=\wr$ and 
$f(\hat w)
\stackrel{\eqref{B_prima_soluz}}{=}
f(\wr)
\stackrel{\eqref{B_prima_soluz}}{=}
\tilde h(\tilde z)$.
\end{itemize}

Summarizing, the situation is similar to the case (A), see Fig.\ \ref{fig:RiemannPb_B}: $L_1^u$, $L_1^v$, and $L_4$ are not created. $L_2$ is a strictly positive shock and $L_3$ is a strictly negative shock. When $L_2$ meets $L_3$, a new wave $L_5=\big((\hat h,\hat z),(\tilde h,\tilde z)\big)$ arises. Let us prove that $L_5$ is a strictly negative shock.
\begin{figure}[h!]
\begin{center}
\begin{psfrags}
\psfrag{x}{$x$} \psfrag{t}{$t$}
\psfrag{Us}{$\ul=\tilde u$} \psfrag{Vs}{$\vl=\tilde v$}
\psfrag{Ut2}{$\tilde u_2$} \psfrag{Vt2}{$\tilde v_2=\vl$} 
\psfrag{Zhat2}{$\hat z_2$}
\psfrag{What2}{$\hat w_2$}
\psfrag{Rc}{$(\hat h,\hat z)$} \psfrag{Rd}{$\!\!\!\!\!\!(\hc,\zc)$} 
\psfrag{FLUSSI}{$f(\tilde u)+f(\tilde v) = h(\hat z)$} 
\psfrag{FLUSSI2}{$h(\tilde z)=f(\hat w)$} \psfrag{Re}{$(\tilde h,\tilde z)$} 
\psfrag{Wp}{$w_r=\hat w$} 
\psfrag{Dx}{\hskip-4pt$\Dx$}\psfrag{0}{$0$}
\psfrag{Lt}{\hskip-5pt$L_1^u$}\psfrag{Lv}{\hskip-5pt$L_1^v$}\psfrag{Lhat}{$L_4$}
\psfrag{Lt2}{\hskip-3pt$L_7^u$}
\psfrag{Ltv}{\hskip-2pt$L_7^v$}\psfrag{Lhat2}{$\hat L_2$}
\psfrag{L1}{$L_2$}\psfrag{L2}{$L_3$}
\psfrag{Lb}{$L_5$}\psfrag{L3}{$L_6$}\psfrag{L4}{$L_6$}
\includegraphics[width=0.8\textwidth]{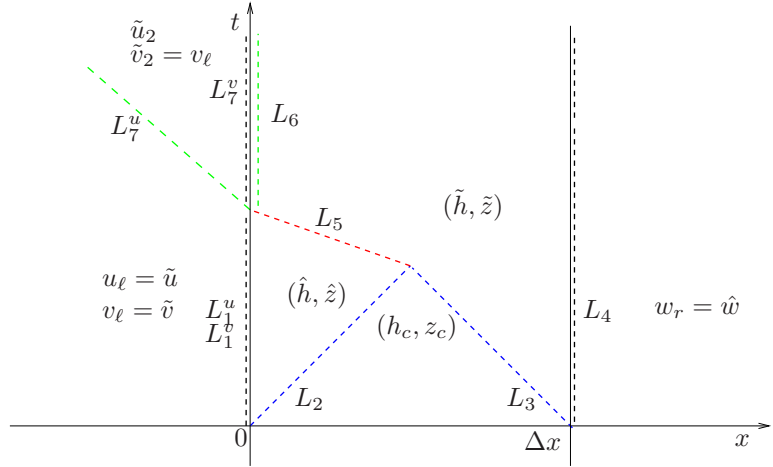}
\end{psfrags}
\end{center}
\caption{CASE (B): Solution of the Riemann problem. }
\label{fig:RiemannPb_B}
\end{figure}

We have 
$$
(\hat z<\sigma<\tilde z) \Rightarrow (f'(\hat z)>f'(\tilde z)) 
\stackrel{\eqref{def:hc}+\eqref{B_prima_soluz}}{\Rightarrow}
(\hat h'(\hat z)>\tilde h'(\tilde z))
$$ 
and then, referring to Appendix\ \ref{sec:RP_gen_intro}, $L_5$ is a shock. Moreover,
$$
\tilde h(\tilde z)
\stackrel{\eqref{B_prima_soluz}}{=}
f(\tilde z)+f(\vl)
\stackrel{\eqref{B_prima_soluz}}{=}
f(\wr)
\stackrel{\eqref{(B)Riemann_proof}}{<}
f(\ul)+f(\vl)
\stackrel{\eqref{B_prima_soluz}}{=}
f(\hat z)+f(\vl)
\stackrel{\eqref{B_prima_soluz}}{=}
\hat h(\hat z)
$$
and then, by \eqref{RHgen} and $\tilde z>\hat z$, $L_5$ is strictly negative.

\medskip

When $L_5$ reaches the line $\{x=0\}$, three new waves $L_6:=\big((\hat h_2,\hat z_2),(\tilde h,\tilde z)\big)$, $L_7^u:=(\tilde u,\tilde u_2)$, and $L_7^v:=(\tilde v,\tilde v_2)$ start, see Fig.\ \ref{fig:RiemannPb_B}. Then we look for the flux $\hat h_2$ and the constants $\tilde u_2, \tilde v_2, \hat z_2$ such that
\begin{eqnarray}
& &(\tilde u_2=\tilde u \text{ or } \tilde u_2\in N(\tilde u)) \quad\mbox{and}\quad f(\tilde u_2) = \min\{G_f(\tilde u_2,\sigma),G_f(\sigma,\hat z_2)\} \label{B_def_utilde2}
\\ [1mm]
& &(\tilde v_2=\tilde v \text{ or } \tilde v_2\in N(\tilde v)) \quad\mbox{and}\quad f(\tilde v_2) = \min\{G_f(\tilde v_2,\sigma),G_f(\sigma,\hat z_2)\} \label{B_def_vtilde2}
\\ [1mm]
& &((\hat h_2,\hat z_2)=(\tilde h,\tilde z) \text{ or } (\hat h_2,\hat z_2)\in \mathcal P(\tilde h,\tilde z)) \quad\mbox{and}\quad \hat h_2(\hat z_2)=f(\tilde u_2)+f(\tilde v_2).
\label{B_def_zhat2}
\end{eqnarray}
We claim that the solution is
\begin{equation}\label{B_seconda_soluz}
\boxed{\tilde u_2>\sigma \textrm{ s.t. } f(\tilde u_2)=f(\wr)-f(\vl), \quad 
\tilde v_2=\vl, \quad 
\hat z_2=\tilde u_2, \quad
\hat h_2=\tilde h.}
\end{equation}
Let us verify that flux/constants in \eqref{B_seconda_soluz} are actually solution of \eqref{B_def_utilde2}--\eqref{B_def_zhat2}.
\begin{itemize}
\item[\boxed{\ref{B_def_utilde2}}]  We prove that $\tilde u_2 \in N(\tilde u=\ul)$. By Proposition \ref{prop:N}, this is true if $\tilde u_2\geq\sigma$ and $f(\tilde u_2)\leq f(\ul)$. We have indeed $\tilde u_2>\sigma$ and
$$
f(\tilde u_2)
\stackrel{\eqref{B_seconda_soluz}}{=}
f(\wr)-f(\vl)
\stackrel{\eqref{(B)Riemann_proof}}{<}
f(\ul).
$$
Moreover,
$$
f(\tilde u_2)
\stackrel{\eqref{B_seconda_soluz}}{=}
f(\wr)-f(\vl)=
\min\{f(\sigma),f(\wr)-f(\vl)\}
\stackrel{\eqref{B_seconda_soluz}}{=}
\min\{G_f(\tilde u_2,\sigma),G_f(\sigma,\hat z_2)\}.
$$
\item[\boxed{\ref{B_def_vtilde2}}] We have $\tilde v_2=\vl=\tilde v$.
Moreover,
$$
f(\tilde v_2)
\stackrel{\eqref{B_seconda_soluz}}{=}
f(\vl)
\stackrel{\eqref{(B)Riemann_proof}}{=}
\min\{f(\vl),f(\wr)-f(\vl)\}
\stackrel{\eqref{B_seconda_soluz}}{=}
\min\{G_f(\tilde v_2,\sigma),G_f(\sigma,\hat z_2)\}.
$$
\item[\boxed{\ref{B_def_zhat2}}] We have $(\hat h_2,\hat z_2)=(\tilde h,\tilde z)$.
Moreover,
$$
\hat h_2(\hat z_2)
\stackrel{\eqref{B_seconda_soluz}}{=}
\tilde h(\tilde z)
\stackrel{\eqref{B_prima_soluz}}{=}
f(\wr)-f(\vl)+f(\vl)
=
f(\wr)
\stackrel{\eqref{B_seconda_soluz}}{=}
f(\tilde u_2)+f(\tilde v_2).
$$
\end{itemize}
Summarizing, we have that the wave $L_6$ is not created. 
Let us study the waves $L_7^u$ and $L_7^v$.
$L_7^u$ is a strictly negative shock since 
$\tilde u
\stackrel{\eqref{B_prima_soluz}}{=}
\ul
\stackrel{\eqref{assumption_uvzw}}{<}
\sigma
\stackrel{\eqref{B_seconda_soluz}}{<}
\tilde u_2$ 
and 
$$
f(\tilde u)
\stackrel{\eqref{B_prima_soluz}}{=}
f(\ul)
\stackrel{\eqref{(B)Riemann_proof}}{>}
f(\wr)-f(\vl)
\stackrel{\eqref{B_seconda_soluz}}{=}
f(\tilde u_2).
$$
$L_7^v$ instead is not created since $(\tilde v,\tilde v_2)=(\vl,\vl)$.

The proof is concluded by choosing 
$$
\bar u=\tilde u_2, \quad \bar v=\tilde v_2, \quad \bar z=\tilde z, \quad \bar w=\hat w.
$$

\item CASE (C)
\begin{equation}\label{(C)Riemann_proof}
\boxed{f(\ul) > \frac{f(\wr)}{2},\quad f(\vl) > \frac{f(\wr)}{2}}
\end{equation}
Let us start again from the initial time $t=0$ and find the fluxes $\hat h(\cdot)$, $\tilde h(\cdot)$ and the constants $\tilde u, \tilde v, \hat z, \tilde z, \hat w$ such that \eqref{A_def_utilde}--\eqref{A_def_what} hold true.
We claim that the solution is
\begin{equation}\label{C_prima_soluz}
\boxed{
\begin{array}{c}
\tilde u=\ul, \quad 
\tilde v=\vl, \quad 
\hat z=\max\{\ul,\vl\}, \quad 
\tilde z>\sigma \textrm{ s.t. } f(\tilde z)=\frac{f(\wr)}{2}, \quad
\\ [2mm]
\hat w=\wr, \quad
\hat h(\cdot)=f(\cdot)+\min\{f(\ul),f(\vl)\}, \quad
\tilde h(\cdot)=f(\cdot)+\frac{f(\wr)}{2}.
\end{array}
}
\end{equation}
We prove only that \eqref{A_def_zhat} and \eqref{A_def_ztilde} hold true. 
\begin{itemize}
\item[\boxed{\ref{A_def_zhat}}] We prove that $(\hat h,\hat z)\in \mathcal P(\hc,\zc)$. To this end, we show that the wave $L_2$ is a strictly positive shock, see Fig.\ \ref{fig:RiemannPb_C}. We have
$$
(\hat z=\max\{\ul,\vl\}<\sigma=\zc) \Rightarrow (f'(\hat z)>f'(\zc)) 
\stackrel{\eqref{def:hc}+\eqref{C_prima_soluz}}{\Rightarrow}
(\hat h'(\hat z)>\hc'(\zc))
$$ 
and then, referring to Appendix\ \ref{sec:RP_gen_intro}, $L_2$ is a shock. Moreover, 
\begin{equation*}
\begin{split}
\hc(\zc)
\stackrel{\eqref{def:hc}}{=}
f(\zc)+f(\sigma)
\stackrel{\eqref{zc=sigma}}{=}
2f(\sigma)
>
f(\ul)+f(\vl)
\stackrel{\eqref{assumption_uvzw}}{=} \\
f(\max\{\ul,\vl\})+\min\{f(\ul),f(\vl)\}
\stackrel{\eqref{C_prima_soluz}}{=}
\hat h(\hat z)
\end{split}
\end{equation*}
and then, by \eqref{RHgen} and $\zc>\hat z$, $L_2$ is strictly positive.
Finally, the equality $\hat h(\hat z) = f(\tilde u) + f(\tilde v)$ is trivially verified.
\item[\boxed{\ref{A_def_ztilde}}] 
We prove that $(\tilde h,\tilde z)\in \mathcal N(\hc,\zc)$. To this end, we show that $L_3$ is a strictly negative shock, see Fig.\ \ref{fig:RiemannPb_C}. We have
$$
(\tilde z>\sigma=\zc) \Rightarrow (f'(\zc)>f'(\tilde z)) 
\stackrel{\eqref{def:hc}+\eqref{C_prima_soluz}}{\Rightarrow}
(\hc'(\zc)>\tilde h'(\tilde z))
$$ 
and then, referring to Appendix\ \ref{sec:RP_gen_intro}, $L_3$ is a shock. Moreover, 
$$
\tilde h(\tilde z)
\stackrel{\eqref{C_prima_soluz}}{=}
f(\wr)
<
2f(\sigma)
\stackrel{\eqref{zc=sigma}}{=}
f(\zc)+f(\sigma)
\stackrel{\eqref{def:hc}}{=}
\hc(\zc)
$$
and then, by \eqref{RHgen} and $\tilde z>\zc$, $L_3$ is strictly negative.
Finally, we have
\begin{equation*}
\tilde h(\tilde z)
\stackrel{\eqref{C_prima_soluz}}{=}
f(\wr)=
\min\left\{f(\sigma)+\frac{f(\wr)}{2},f(\wr)\right\}
\stackrel{\eqref{C_prima_soluz}}{=}
\min\{G_{\tilde h}(\tilde z,\sigma),G_f(\sigma,\hat w)\}.
\end{equation*}
\end{itemize}

Summarizing, the situation is similar to the case (A) and (B), see Fig.\ \ref{fig:RiemannPb_C}: $L_1^u$, $L_2^v$, and $L_4$ are not created. $L_2$ is a strictly positive shock and $L_3$ is a strictly negative shock. When $L_2$ meets $L_3$, a new wave $L_5=\big((\hat h,\hat z),(\tilde h,\tilde z)\big)$ arises. Let us prove that $L_5$ is a strictly negative shock.
\begin{figure}[h!]
\begin{center}
\begin{psfrags}
\psfrag{x}{\hskip5pt$x$} \psfrag{t}{$t$}
\psfrag{Us}{$\ul=\tilde u$} \psfrag{Vs}{$\vl=\tilde v$}
\psfrag{Rc}{$(\hat h,\hat z)$} \psfrag{Rd}{$\!\!\!\!\!\!(\hc,\zc)$} 
\psfrag{Re}{$(\tilde h,\tilde z)$}
\psfrag{Dx}{\hskip-3pt$\Dx$}\psfrag{0}{$0$}
\psfrag{Vt2}{$\tilde u_2,\tilde v_2$}
\psfrag{Lhat}{$L_4$}\psfrag{Wp}{$w_r = \hat w$}
\psfrag{Lt}{\hskip-5pt$L_1^u$}\psfrag{Lv}{\hskip-5pt$L_1^v$}
\psfrag{Lt2}{\hskip-10pt$L^u_7$, $L^v_7$}
\psfrag{L1}{$L_2$}\psfrag{L2}{$L_3$}
\psfrag{Lb}{$L_5$}\psfrag{L3}{$L_6$}
\includegraphics[width=0.8\textwidth]{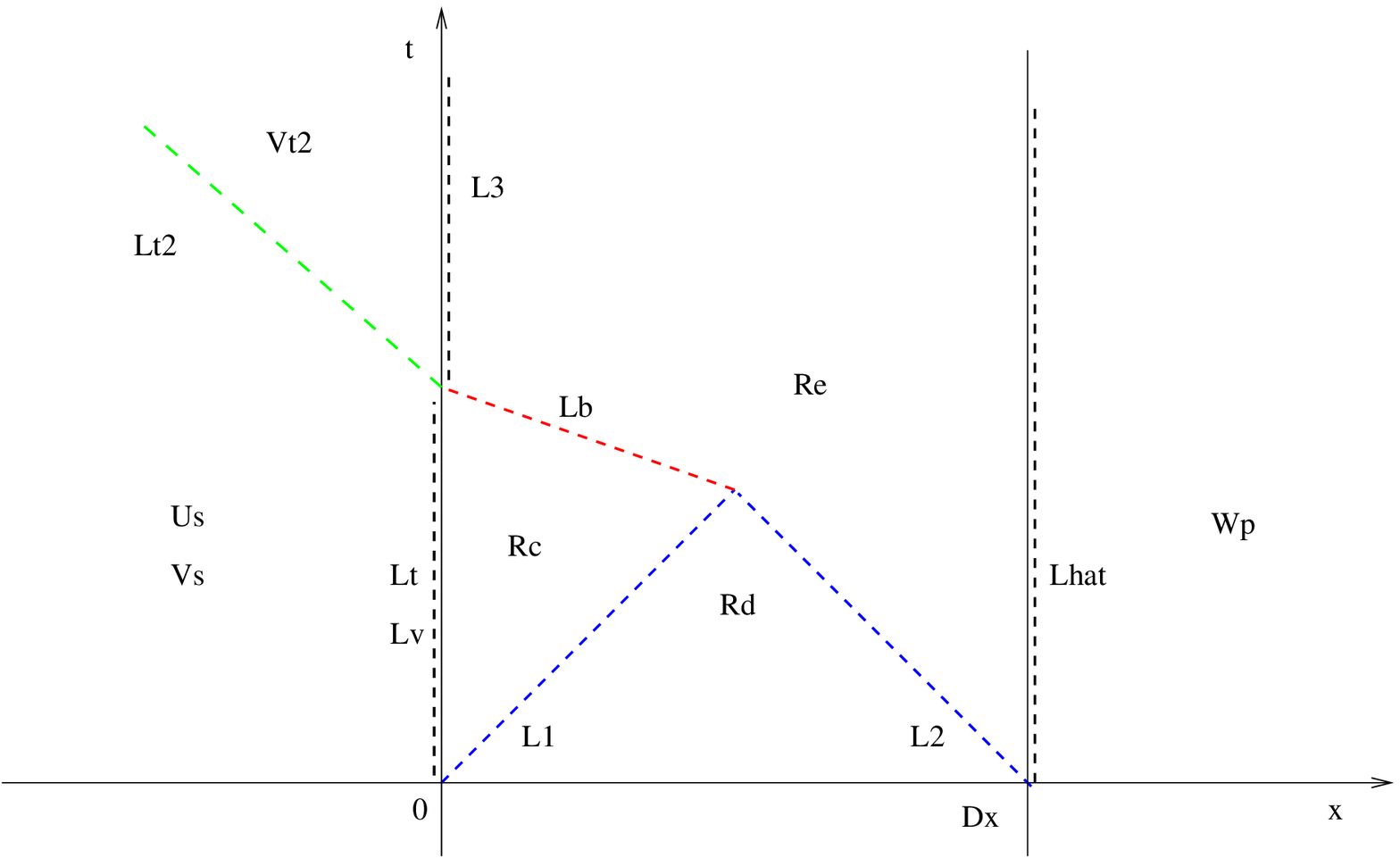}
\end{psfrags}
\end{center}
\caption{CASE (C): Solution of the Riemann problem.}
\label{fig:RiemannPb_C}
\end{figure}

We have 
$$
(\hat z<\sigma<\tilde z) \Rightarrow (f'(\hat z)>f'(\tilde z)) 
\stackrel{\eqref{C_prima_soluz}}{\Rightarrow}
(\hat h'(\hat z)>\tilde h'(\tilde z))
$$ 
and then, referring to Appendix\ \ref{sec:RP_gen_intro}, $L_5$ is a shock. Moreover,
$$
\tilde h(\tilde z)
\stackrel{\eqref{C_prima_soluz}}{=}
f(\wr)
\stackrel{\eqref{(C)Riemann_proof}}{<}
f(\ul)+f(\vl)
\stackrel{\eqref{assumption_uvzw}}{=}
f(\max\{\ul,\vl\})+\min\{f(\ul),f(\vl)\}
\stackrel{\eqref{B_prima_soluz}}{=}
\hat h(\hat z)
$$
and then, by \eqref{RHgen} and $\tilde z>\hat z$, $L_5$ is strictly negative.

\medskip

When $L_5$ reaches the line $\{x=0\}$, three new waves $L_6:=\big((\hat h_2,\hat z_2),(\tilde h,\tilde z)\big)$, $L_7^u:=(\tilde u,\tilde u_2)$, and $L_7^v:=(\tilde v,\tilde v_2)$ start, see Fig.\ \ref{fig:RiemannPb_B}. Then we look for the flux $\hat h_2$ and the constants $\tilde u_2, \tilde v_2, \hat z_2$ such that \eqref{B_def_utilde2}--\eqref{B_def_zhat2} hold true. 
We claim that the solution is
\begin{equation}\label{C_seconda_soluz}
\boxed{
\tilde u_2=\tilde v_2=\hat z_2=\tilde z, \quad
\hat h_2=\tilde h.
}
\end{equation}
We skip the proof that \eqref{B_def_utilde2}--\eqref{B_def_zhat2} are verified and we just prove that $L^u_7, L^v_7$ are strictly negative shocks and that $L_6$ is not created. 

By \eqref{C_seconda_soluz} we get $\tilde u_2>\sigma$ and then $\tilde u=\ul<\sigma<\tilde u_2$. This proves that $L^u_7$ is a shock. Moreover, it is a strictly negative shock since
$$
f(\tilde u)
\stackrel{\eqref{C_prima_soluz}}{=}
f(\ul)
\stackrel{\eqref{(C)Riemann_proof}}{>}
\frac{f(\wr)}{2}
\stackrel{\eqref{C_seconda_soluz}}{=}
f(\tilde u_2).
$$
The result for $L^v_7$ is proved analogously.
$L_6$ instead is not created since by \eqref{C_seconda_soluz} we have $(\hat h_2,\hat z_2)=(\tilde h, \tilde z)$.

The proof is concluded by choosing 
$$
\bar u=\bar v=\tilde u_2=\tilde v_2, \quad \bar z=\tilde z, \quad \bar w=\hat w.
$$
\end{itemize}
\end{proof}
\begin{remark}\label{rem:htotale} (\textit{Modified flux})
The precise expression of $h$ in \eqref{def:h(x,t)} is found \textit{a posteriori}, joining together the solutions
\begin{itemize}
\item \eqref{A_prima_soluz} and \eqref{A_seconda_soluz} for CASE (A), see Fig.\ \ref{fig:RiemannPb_A},  
\item \eqref{B_prima_soluz} and \eqref{B_seconda_soluz}  for CASE (B), see Fig.\ \ref{fig:RiemannPb_B}, 
\item \eqref{C_prima_soluz} and \eqref{C_seconda_soluz} for CASE (C), see Fig.\ \ref{fig:RiemannPb_C}.
\end{itemize}
\end{remark}
\begin{remark} (\textit{Uniqueness})
Once the fluxes $\hat h$, $\hc$, $\tilde h$ are given, the constants $\tilde u$, $\tilde v$, $\hat z$, $\tilde z$, $\hat w$, $\tilde z_2$, $\hat w_2$ are uniquely determined. 
For example, in CASE (A) let us assume by contradiction that $\tilde u\neq \ul$. Then, by \eqref{A_def_utilde} we have that $\tilde u\in N(\ul)$ and then 
\begin{equation}\label{rem:u>sigma}
\tilde u>\sigma.
\end{equation} 
On the other hand, since $\hat h\neq\hc$, \eqref{A_def_zhat} requires $(\hat h,\hat z)\in \mathcal P(\hc,\zc=\sigma)$, which implies $\hat z\leq\sigma$. Finally, by \eqref{A_def_utilde} we get
$$
f(\tilde u)=\min\{G_f(\tilde u,\sigma),G_f(\sigma,\hat z)\}
\stackrel{(\hat z\leq\sigma)}{=}
\min\{f(\sigma),f(\sigma)\}=f(\sigma)
$$
which implies $\tilde u=\sigma$, in contradiction with \eqref{rem:u>sigma}.

Other cases can be managed analogously.
\end{remark}
\section{Riemann and half-Riemann problem}\label{sec:Riemann_and_half-Riemann_intro}
In this appendix we recall some basic notions about Riemann and half-Riemann problem. We also introduce a trivial generalization of the (half) Riemann problem in case of different fluxes on the left and right side.

\subsection{The Riemann problem}\label{sec:RPintro}
Let us consider the following Cauchy problem with Heaviside initial data
\begin{equation}\label{RP_intro}
\left\{
\begin{array}{l}
	\frac{\partial}{\partial t} \rho + \frac{\partial}{\partial x} f(\rho)=0,\\
	\rho(x,0)=\left\{
		\begin{array}{ll} \rhomeno, & \mbox{ if } x< 0,\\
			\rhopiu, & \mbox{ if } x> 0,
		\end{array}\right.
\end{array}\right.
\end{equation}
for some constant initial data $\rhomeno$ and $\rhopiu$ in $[0,\rhomax]$.
The unique weak entropy solution is given by:
\begin{itemize}
\item{If $\rholeft < \rhoright$ (shock wave):}
\begin{equation}
\rho(x,t)=\left\{\begin{array}{ll} \rholeft & \mbox{ for } x< \lambda t\\
				\rhoright & \mbox{ for } x> \lambda t
\end{array}\right.
\quad \textrm{ with } \quad  
\lambda = \Frac{f(\rhoright)-f(\rholeft)}{\rhoright-\rholeft}.
\end{equation}
We have a \textit{shock wave with positive speed} if $f(\rhoright) \geq f(\rholeft)$, and with \textit{negative speed} if $f(\rhoright) \leq f(\rholeft)$.
\item{If $\rholeft>\rhoright$ (rarefaction wave):}
\begin{equation}
\rho(x,t)=\left\{\begin{array}{ll} \rholeft & \mbox{ for } x/t\leq f^\prime(\rholeft),\\
				\psi(x/t) & \mbox{ for } f^\prime(\rholeft) \leq x/t \leq f^\prime(\rhoright),\\
				\rhoright & \mbox{ for } x/t\geq f^\prime(\rhoright),
\end{array}\right.
\end{equation}
where the function $\psi(\xi)$ is defined by the solution of $f^\prime(\psi(\xi))=\xi$. 
We have a {\it rarefaction wave with positive speed} if $f^\prime(\rholeft), f^\prime(\rhoright)\geq 0$, and with
{\it negative speed} if $f^\prime(\rholeft), f^\prime(\rhoright)\leq 0$.
\end{itemize}
\subsection{Half Riemann problem}\label{sec:halfRPintro}
Here we employ the notation $\compl$ defined by \eqref{tau}.
Following \cite{mercier2009JMAA}, we call {\it half Riemann problem} the simple case of an initial-boundary value problem in the quarter of plane $\{x\leq 0\}$ or $\{x\geq 0\}$ when the initial condition is a constant.
The problem is then to find the acceptable boundary condition at $x=0$. 

\medskip

The study of the \textit{left-half} problem is equivalent to searching an artificial right state in the problem \eqref{RP_intro} which lead to waves with negative speed, in order to know which are the states attainable along the line $x=0$.

\begin{definition}\label{def:N}
For any $\rholeft\in[0,\rhomax]$, we define $N(\rholeft)$ to be the set of points $\tilde \rho\in[0,\rhomax]\backslash\{\rholeft\}$ such that the solution to the Riemann problem
\begin{equation}
\left\{
\begin{array}{l}
	\frac{\partial}{\partial t} \rho + \frac{\partial}{\partial x} f(\rho)=0,\\
	\rho(x,0)=\left\{
		\begin{array}{ll} \rholeft, & \mbox{ if } x< 0,\\
			\tilde\rho, & \mbox{ if } x> 0,
		\end{array}\right.
\end{array}\right.
\end{equation}
contains only waves with negative speed.
\end{definition}
It is trivial to verify the following result.
\begin{proposition}\label{prop:N} 
We have
\begin{equation}
N(\rholeft)=
\left\{
\begin{array}{ll}
[\rholeft^\compl,\rhomax]=\{\tilde\rho : \tilde\rho\geq\sigma \mbox{ and } f(\tilde\rho)\leq f(\rholeft)\} & \mbox{ if }\rholeft\leq \sigma, \\ [2mm]
\left[\sigma,\rhomax\right]          & \mbox{ if } \rholeft>\sigma. 
\end{array}
\right.
\end{equation}
\end{proposition}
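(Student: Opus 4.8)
The plan is to read the answer straight off the explicit Riemann solution recalled in Section~\ref{sec:RPintro}, exploiting the structure of $f$ from \eqref{proprieta_f}: strict concavity makes $f'$ strictly decreasing with $f'(\sigma)=0$, so $f'>0$ on $(0,\sigma)$ and $f'<0$ on $(\sigma,\rhomax)$; also $f$ is strictly increasing on $[0,\sigma]$ and strictly decreasing on $[\sigma,\rhomax]$, so for $\rho_\ell\le\sigma$ the conditions ``$\tilde\rho\ge\sigma$ and $f(\tilde\rho)\le f(\rho_\ell)$'' are together equivalent to $\tilde\rho\ge\rho_\ell^\compl$, where $\rho_\ell^\compl$ is the reflection from \eqref{tau} (with the convention $\rho_\ell^\compl=\sigma$ in the degenerate case $\rho_\ell=\sigma$). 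For a fixed $\rho_\ell$ I would split the verification according to whether the artificial right state $\tilde\rho$ generates a shock ($\tilde\rho>\rho_\ell$) or a rarefaction ($\tilde\rho<\rho_\ell$), and in each case determine exactly which $\tilde\rho$ make all waves have non-positive speed.

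For the shock case $\tilde\rho>\rho_\ell$ the wave is a single shock of speed $\lambda=\bigl(f(\tilde\rho)-f(\rho_\ell)\bigr)/(\tilde\rho-\rho_\ell)$, and since the denominator is positive, $\lambda\le 0$ iff $f(\tilde\rho)\le f(\rho_\ell)$. If $\rho_\ell\le\sigma$, then for $\tilde\rho\in(\rho_\ell,\sigma]$ strict monotonicity on $[0,\sigma]$ gives $f(\tilde\rho)>f(\rho_\ell)$, hence $\lambda>0$ and $\tilde\rho\notin N(\rho_\ell)$; for $\tilde\rho>\sigma$ the inequality $f(\tilde\rho)\le f(\rho_\ell)=f(\rho_\ell^\compl)$ combined with the monotonicity of $f$ on $[\sigma,\rhomax]$ is equivalent to $\tilde\rho\ge\rho_\ell^\compl$, i.e.\ $\tilde\rho\in[\rho_\ell^\compl,\rhomax]$. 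If instead $\rho_\ell>\sigma$, then every $\tilde\rho\in(\rho_\ell,\rhomax]$ has $f(\tilde\rho)<f(\rho_\ell)$ by strict monotonicity on $[\sigma,\rhomax]$, so $\lambda<0$ and the whole interval $(\rho_\ell,\rhomax]$ is admissible.

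For the rarefaction case $\tilde\rho<\rho_\ell$ the fan occupies the (non-degenerate, since $f'$ is strictly decreasing) speed interval $[f'(\rho_\ell),f'(\tilde\rho)]$; all its speeds are non-positive iff the largest one is, i.e.\ iff $f'(\tilde\rho)\le 0$, i.e.\ iff $\tilde\rho\ge\sigma$. Hence if $\rho_\ell\le\sigma$ no $\tilde\rho<\rho_\ell$ qualifies (all such $\tilde\rho$ are $<\sigma$), while if $\rho_\ell>\sigma$ the admissible ones are precisely $\tilde\rho\in[\sigma,\rho_\ell)$. Collecting the shock and rarefaction contributions then gives $N(\rho_\ell)=[\rho_\ell^\compl,\rhomax]=\{\tilde\rho:\tilde\rho\ge\sigma,\ f(\tilde\rho)\le f(\rho_\ell)\}$ for $\rho_\ell\le\sigma$ and $N(\rho_\ell)=[\sigma,\rhomax]$ for $\rho_\ell>\sigma$, as claimed.

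There is no real obstacle here — as the paper says, the result is ``trivial'' — and the only points needing a little care are the bookkeeping ones: reading ``negative speed'' as ``non-positive speed'' (consistently with the closed intervals in the statement and the later use of $N(\cdot)$ in Appendix~\ref{app:A3}), handling the borderline states where a wave speed is exactly $0$, the degenerate case $\rho_\ell=\sigma$ (where $\rho_\ell^\compl$ is fixed by the convention above), and the fact that $\rho_\ell$ itself is excluded from $N(\rho_\ell)$ by Definition~\ref{def:N} even though it appears in the closed interval on the right-hand side. With these conventions in force the sign analysis above is complete.
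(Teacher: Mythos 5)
Your proof is correct and is exactly the direct verification the paper has in mind when it declares the result ``trivial to verify'' (the paper itself supplies no proof): a case split between shock ($\tilde\rho>\rholeft$) and rarefaction ($\tilde\rho<\rholeft$), with the sign of the shock speed read off the Rankine--Hugoniot quotient and the sign of the rarefaction fan read off $f'(\tilde\rho)$, using the monotonicity of $f$ on $[0,\sigma]$ and $[\sigma,\rhomax]$. Your bookkeeping remarks (``negative'' meaning non-positive, the convention $\sigma^\compl=\sigma$, and the exclusion of $\rholeft$ itself from $N(\rholeft)$ despite the closed interval in the statement) are all consistent with how $N(\cdot)$ is used later in Appendix~\ref{app:A3}, so nothing further is needed.
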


%
%
%
%
\medskip

The study of the \textit{right-half} problem is equivalent to searching an artificial left state in the problem \eqref{RP_intro} which lead to waves with positive speed, in order to know which are the states attainable along the line $x=0$.

\begin{definition}\label{def:P}
For any $\rhoright\in[0,\rhomax]$, we define $P(\rhoright)$ to be the set of points $\hat \rho\in[0,\rhomax]\backslash\{\rhoright\}$ such that the solution to the Riemann problem
\begin{equation}
\left\{
\begin{array}{l}
	\frac{\partial}{\partial t} \rho + \frac{\partial}{\partial x} f(\rho)=0,\\
	\rho(x,0)=\left\{
		\begin{array}{ll} \hat\rho, & \mbox{ if } x< 0,\\
			\rhoright, & \mbox{ if } x> 0,
		\end{array}\right.
\end{array}\right.
\end{equation}
contains only waves with positive speed.
\end{definition}
Analogously to the previous case, we get the following result.
\begin{proposition}\label{prop:P} 
We have
\begin{equation}
P(\rholeft)=
\left\{
\begin{array}{ll}
[0,\sigma] & \mbox{ if } \rhoright\leq \sigma, \\
\left[0,\rhoright^\compl \right]=\{\hat\rho : \hat\rho < \sigma \mbox{ and } f(\hat\rho)\leq f(\rhoright)\} & \mbox{ if } \rhoright> \sigma. 
\end{array}
\right.
\end{equation}
\end{proposition}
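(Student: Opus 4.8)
This proposition is the mirror image of Proposition \ref{prop:N}: the plan is to repeat that argument with the unknown (``artificial'') state now on the \emph{left} and the fixed state $\rhoright$ on the \emph{right}, reading off the admissible left states directly from the explicit solution of the Riemann problem \eqref{RP_intro} recalled in Section \ref{sec:RPintro}. The whole proof is a short case analysis on the position of a candidate state $\hat\rho$ relative to $\rhoright$ and to $\sigma$.

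First I would record once and for all the sign of the waves in the solution of \eqref{RP_intro} with left state $\hat\rho$ and right state $\rhoright$. If $\hat\rho<\rhoright$ the solution is a single shock of speed $\lambda=(f(\rhoright)-f(\hat\rho))/(\rhoright-\hat\rho)$; the denominator is positive, so the speed is $\ge 0$ if and only if $f(\hat\rho)\le f(\rhoright)$. If $\hat\rho>\rhoright$ the solution is a rarefaction fan occupying every speed between $f'(\hat\rho)$ and $f'(\rhoright)$; since $f$ is strictly concave, $f'$ is strictly decreasing, so the fan lies in the non-negative speeds precisely when $f'(\hat\rho)\ge 0$, i.e. when $\hat\rho\le\sigma$ (and then $f'(\rhoright)\ge f'(\hat\rho)\ge 0$ automatically). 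If $\hat\rho=\rhoright$ there is no wave at all.

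Then I would split on the sign of $\rhoright-\sigma$. If $\rhoright\le\sigma$: for $\hat\rho\le\sigma$ either we are in the shock case, where $f$ is increasing on $[\hat\rho,\rhoright]\subseteq[0,\sigma]$ so $f(\hat\rho)\le f(\rhoright)$, or we are in the rarefaction case with $\hat\rho\le\sigma$ --- in both the waves have non-negative speed, so $[0,\sigma]\subseteq P(\rhoright)$; for $\hat\rho>\sigma$ one has $\hat\rho>\sigma\ge\rhoright$, hence a rarefaction with $f'(\hat\rho)<0$, which contains negative speeds, so $\hat\rho\notin P(\rhoright)$; this gives $P(\rhoright)=[0,\sigma]$. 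If instead $\rhoright>\sigma$: strict concavity together with $f(0)=f(\rhomax)=0$ makes $\rhoright^\compl$ well defined with $\rhoright^\compl\in[0,\sigma)$, and monotonicity of $f$ on $[0,\sigma]$ gives $\{\hat\rho<\sigma:\ f(\hat\rho)\le f(\rhoright)\}=[0,\rhoright^\compl]$; for such $\hat\rho$ one has $\hat\rho<\sigma<\rhoright$, so the solution is a shock with $f(\hat\rho)\le f(\rhoright)$, of non-negative speed, whence $[0,\rhoright^\compl]\subseteq P(\rhoright)$. For the reverse inclusion I would check the complementary states: $\hat\rho\in(\rhoright^\compl,\rhoright)$ gives a shock with $f(\hat\rho)>f(\rhoright)$ (negative speed), $\hat\rho>\rhoright$ gives a rarefaction with $f'(\hat\rho)<0$ (negative speeds), and $\hat\rho=\rhoright$ is excluded by Definition \ref{def:P}; hence $P(\rhoright)=[0,\rhoright^\compl]$.

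The one point that needs care --- a matter of convention rather than a genuine obstacle --- is how to treat zero-speed waves, i.e. a stationary shock ($f(\hat\rho)=f(\rhoright)$) or the leftmost characteristic of a rarefaction emanating from $\sigma$. The closed intervals in the statement correspond to admitting waves of \emph{non-negative} speed (equivalently, the left states attainable along the line $x=0$), so I would fix this convention explicitly at the outset; with it, all the inequalities above read as written and the proof closes, exactly as for Proposition \ref{prop:N}.
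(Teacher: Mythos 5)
Your proof is correct and is exactly the direct verification the paper has in mind: the paper offers no written proof for this proposition (it is stated as following "analogously" to Proposition \ref{prop:N}, itself declared trivial to verify), and your case analysis on the explicit Riemann solution with the artificial state on the left is the intended argument. Your explicit remark about the convention for zero-speed waves (non-strict inequalities, hence closed intervals) is a sensible clarification of a point the paper leaves implicit.
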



\subsection{The (half) Riemann problem with different fluxes}\label{sec:RP_gen_intro}
To our purposes, we need a trivial generalization of the Riemann problem with different fluxes. Let us consider the following problem
\begin{equation}\label{RPgen}
\left\{
\begin{array}{l}
	\frac{\partial}{\partial t} \rho + \frac{\partial}{\partial x} h(\rho)=0,\\
	\rho(x,0)=\left\{
		\begin{array}{ll} 
		    \rholeft,  & \mbox{ if } x<0,\\
			\rhoright, & \mbox{ if } x>0,
		\end{array}\right.
\end{array}\right.
\mbox{with }
\quad
h(\rho)=
\left\{
\begin{array}{ll}
	\hL(\rho), & \mbox{ if } \rho=\rholeft,\\
	\hR(\rho), & \mbox{ if } \rho=\rhoright,
\end{array}\right.
\end{equation}
where $\hL,\hR:[0,\rhomax]\to\R$ are two $C^1$ fluxes.
This problem is different from what is commonly called ``conservation law with discontinuous flux'', since in our case the discontinuity does not depend  explicitly on $x$. Instead, it depends on the solution $\rho$ itself and it is located at the boundary of two adjacent advection problems of a constant state. Let us clarify this point by computing the entropy solution to \eqref{RPgen}.
\begin{itemize}
\item
If $\hL'(\rho_\ell)>\hR'(\rho_r)$, characteristic lines starting from the line $\{x<0\}$ meet those starting from the line $\{x>0\}$, creating a shock $x=\xi(t)$, see Fig.\ \ref{fig:riemannpb_gen}a. In order to have the mass conserved across the shock, a Rankine-Hugoniot-like condition must be verified. This condition is easy found by standard arguments \cite[Chapt.77]{habermanbook} as 
\begin{equation}\label{RHgen}
\dot \xi(t)=\frac{\hR(\rho_r)-\hL(\rho_l)}{\rho_r-\rho_l}.
\end{equation}
Then we have a \textit{shock wave with positive speed} if $\dot \xi>0$, and with \textit{negative speed} if $\dot\xi<0$.
\item
If instead $\hL'(\rho_\ell)<\hR'(\rho_r)$ a rarefaction is created, see Fig.\ \ref{fig:riemannpb_gen}b. As in the classical Riemann problem, the ``boundaries'' of the rarefaction are given by the lines $x=\hL'(\rholeft)t$ and $x=\hR'(\rhoright)t$. 
Then we have a \textit{rarefaction wave with positive speed} if $\hL'(\rholeft),\hR'(\rhoright)>0$, and with \textit{negative speed} if $\hL'(\rholeft),\hR'(\rhoright)<0$.
Note that the sign of the waves can be computed even if the solution inside the rarefaction is not explicitly given.
\end{itemize}
\begin{figure}[h!]
\begin{center}
\begin{psfrags}
\psfrag{x}{$x$} \psfrag{t}{$t$}
\psfrag{xi}{\ $\xi$}
\psfrag{dl}{\hskip-13pt$\rho=\rholeft$} 
\psfrag{dr}{$\rho=\rhoright$}
\psfrag{rhol}{$\rholeft$} 
\psfrag{rhor}{$\rhoright$}
\psfrag{fr}{$h=\hR$} 
\psfrag{fl}{\hskip-13pt$h=\hL$}
\includegraphics[width=0.9\textwidth]{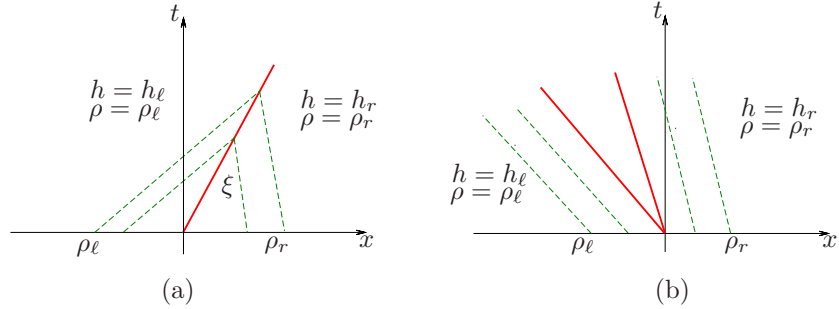}
\end{psfrags}
\end{center}
\hskip0.5cm (a) \hskip6cm (b)
\caption{Riemann problem with different fluxes. (a) Shock, (b) Rarefaction.}
\label{fig:riemannpb_gen}
\end{figure}
Following Definitions \ref{def:N} and \ref{def:P} we introduce the following two definitions:
\begin{definition}\label{def:Ngen}
For any flux $\hL:[0,\rhomax]\to\R$ and density $\rholeft\in[0,\rhomax]$, we define $\mathcal N(\hL,\rholeft)$ to be the set of flux/density couples $(\tilde h,\tilde\rho)$, with $\tilde h:[0,\rhomax]\to\R$ and $\tilde \rho\in[0,\rhomax]\backslash\{\rholeft\}$, such that the solution to the Riemann problem with different fluxes
\begin{equation}\label{RPgenN}
\left\{
\begin{array}{l}
	\frac{\partial}{\partial t} \rho + \frac{\partial}{\partial x} h(\rho)=0,\\
	\rho(x,0)=\left\{
		\begin{array}{ll} 
		    \rholeft,   & \mbox{ if } x<0,\\
			\tilde\rho, & \mbox{ if } x>0,
		\end{array}\right.
\end{array}\right.
\mbox{with }
\quad
h(\rho)=
\left\{
\begin{array}{ll}
	\hL(\rho), & \mbox{ if }      \rho=\rholeft,\\
	\tilde h(\rho), & \mbox{ if } \rho=\tilde\rho,
\end{array}\right.
\end{equation}
contains only waves with negative speed.
\end{definition}
\begin{definition}\label{def:Pgen}
For any flux $\hR:[0,\rhomax]\to\R$ and density $\rhoright\in[0,\rhomax]$, we define $\mathcal P(\hR,\rhoright)$ to be the set of flux/density couples $(\hat h,\hat\rho)$, with $\hat h:[0,\rhomax]\to\R$ and $\hat\rho\in[0,\rhomax]\backslash\{\rhoright\}$, such that the solution to the Riemann problem with different fluxes
\begin{equation}\label{RPgenP}
\left\{
\begin{array}{l}
	\frac{\partial}{\partial t} \rho + \frac{\partial}{\partial x} h(\rho)=0,\\
	\rho(x,0)=\left\{
		\begin{array}{ll} 
		    \hat\rho,  & \mbox{ if } x<0,\\
			\rhoright, & \mbox{ if } x>0,
		\end{array}\right.
\end{array}\right.
\mbox{with }
\quad
h(\rho)=
\left\{
\begin{array}{ll}
	\hat h(\rho), & \mbox{ if }  \rho=\hat\rho,\\
	\hR(\rho),    & \mbox{ if }  \rho=\rhoright,
\end{array}\right.
\end{equation}
contains only waves with positive speed.
\end{definition}

\section*{Acknowledgments}
The authors thank R. Natalini, B. Piccoli and F. S. Priuli for their useful suggestions, and R. M. Colombo for having pointed out the reference \cite{mercier2009JMAA}.


\end{document}